\newcommand{\reg}{\textnormal{reg}\,}
\newcommand{\heit}{\textnormal{ht}\,}
\newcommand{\pd}{\textnormal{pd}\,}
\newtheorem{thm}{Theorem}[section]
\newtheorem{lem}[thm]{Lemma}
\newtheorem{prop}[thm]{Proposition}
\newtheorem{defn}[thm]{Definition}
\newtheorem{rem}[thm]{Remark}
\newtheorem{ques}[thm]{Question}
\numberwithin{equation}{section}
\begin{document}
\bibliographystyle{amsplain}

\author[M. Mahmoudi]{Mohammad Mahmoudi}

\address{Mohammad Mahmoudi\\Department of Mathematics, Science and Research Branch, Islamic
Azad University (IAU), Tehran, Iran.}\email{mahmoudi54@gmail.com}

\author[A. Mousivand]{Amir Mousivand}

\address{Amir Mousivand\\Department of Mathematics, Science and Research Branch, Islamic Azad
University (IAU), Tehran, Iran.}\email{amirmousivand@gmail.com}

\author[M. Crupi]{Marilena Crupi}

\address{Marilena Crupi\\ Dipartimento Di Matematica, Universita� di Messina, Viale Ferdinando Stagno d'Alcontres, 31 \\
98166 Messina, Italy. Fax number: +39 090 393502}\email{mcrupi@unime.it}

\author[G. Rinaldo]{Giancarlo Rinaldo}

\address{Giancarlo Rinaldo\\ Dipartimento Di Matematica, Universita� di Messina, Viale Ferdinando Stagno d'Alcontres, 31 \\
98166 Messina, Italy. Fax number: +39 090 393502}\email{rinaldo@dipmat.unime.it}

\author[N. Terai]{Naoki Terai}

\address{Naoki Terai\\Department of Mathematics, Faculty of Culture and Education, Saga University, Saga 840-8502, Japan.}\email{terai@cc.saga-u.ac.jp}

\author[S. Yassemi]{Siamak Yassemi}
\address{Siamak Yassemi\\Department of Mathematics, University of
Tehran, Tehran, Iran and School of Mathematics, Institute for
Research in Fundamental Sciences (IPM), Tehran
Iran.}\email{yassemi@ipm.ir}


\keywords{Cohen-Macaulay graph, Unmixed graph, Very well-covered graph, Vertex decomposable graph,
Castelnuovo-Mumford regularity}

\subjclass[2000]{13H10, 05C75}

\title[vertex decomposability and regularity of . . .]{vertex decomposability and regularity of\\ very well-covered graphs}

\begin{abstract} A graph $G$ is well-covered if it has no isolated vertices and all the maximal independent sets have the same cardinality. If furthermore  two times this cardinality is equal to $|V(G)|$, the graph $G$ is called very well-covered. The class of very well-covered graphs contains bipartite well-covered graphs.
Recently in \cite{CRT} it is shown that a very well-covered graph $G$ is Cohen-Macaulay
if and only if it is pure shellable. In this article we improve this result by showing that $G$ is Cohen-Macaulay
if and only if it is pure vertex decomposable. In addition, if $I(G)$ denotes the edge ideal of $G$, we show that the Castelnuovo-Mumford regularity of $R/I(G)$ is equal to the maximum number of pairwise 3-disjoint edges of $G$. This improves Kummini's result on unmixed bipartite graphs.
\end{abstract}

\maketitle

\section{Introduction} Let $G$ be a simple undirected graph with the vertex set
$V(G)=\{ x_1 ,  . . .  , x_n\}$ and edge set $E(G)$. By identifying
the vertex $x_i$ with the variable $x_i$ in the polynomial ring
$R=k[ x_1 ,  . . .  , x_n]$ over a field $k$, one can associate to
$G$ a square-free monomial ideal $I(G)$ generated by all
quadratic square-free monomials $x_i x_j$ where $\{x_i , x_j\}$
is an edge of $G$. The ideal $I(G)$ is called the edge ideal of
$G$. A graph $G$ is said to be (sequentially) Cohen-Macaulay over
$k$ if $R/I(G)$ is a (sequentially) Cohen-Macaulay ring. It is
known that a graph $G$ is Cohen-Macaulay if and only if it is
sequentially Cohen-Macaulay and unmixed, i.e. all its minimal
vertex covers have the same cardinality. A recent stream in
commutative algebra and algebraic combinatorics is to describe
the algebraic properties of the edge ideal $I(G)$ in terms of
combinatorial properties of $G$. A graph $G$ is called bipartite
if its vertex set can be divided into two disjoint sets $V_1$ and
$V_2$ such that every edge connects a vertex in $V_1$ to one in
$V_2$. Unmixed bipartite graphs and Cohen-Macaulay bipartite
graphs have been characterized nicely in terms of combinatorial
properties of $G$ (see \cite {HH} and \cite {V2}).

A simplicial complex $\Delta$ over a set of vertices $V=\{v_1 ,
 . . .  , v_n \}$ is a collection of subsets of $V$, with the
property that $\{v_i\}\in\Delta$ for all $i$, and if $F\in\Delta$,
then all subsets of $F$ are also in $\Delta$ (including the empty
set). An element of $\Delta$ is called a face of $\Delta$ and a
simplicial complex is called pure if all its facets (maximal
faces with respect to inclusion) have the same cardinality. Using
the Stanley-Reisner correspondence, one can associate to $G$ the
simplicial complex $\Delta_G$ where $I_{\Delta_G}=I(G)$. Note
that the faces of $\Delta_G$ are the independent sets of $G$.
Thus $F$ is a face of $\Delta_G$ if and only if there is no edge
of $G$ joining any two vertices of $F$. This simplicial complex
is called the independence complex of $G$. We call a graph $G$
vertex decomposable (shellable) if the simplicial complex
$\Delta_G$ is vertex decomposable (shellable). We have the
following implications:
$$\mbox {pure vertex~decomposable}\Longrightarrow \mbox {pure shellable}\Longrightarrow \mbox {Cohen-Macaulay}$$
and it is known that the above implications are strict.

A graph $G$ is well-covered if it has no isolated vertices and all the maximal independent sets have the same cardinality. If furthermore  two times this cardinality is equal to $|V(G)|$, the graph is called very well-covered. In \cite{GV1} it is shown that for a well-covered graph we have $2\heit(I(G))\geq |V(G)|$. Since the complement of any maximal independent set is a minimal vertex cover. It follows that a well-covered graph $G$ is very well-covered if and only if $2\heit(I(G))=|V(G)|$. The class of very well-covered graphs contains bipartite well-covered graphs.
In this article we consider the class of very well-covered graphs with $2n$ vertices. It is known that any graph
in this class has perfect matching (see \cite [Remark 2.2]{GV2}).
Hence we may assume:\\

$(\ast)~V(G)=X\cup Y, X\cap Y=\emptyset$, where $X=\{ x_1 ,  . . .
, x_n\}$ is a minimal vertex cover of $G$ and $Y=\{ y_1 ,  . . .  ,
y_n\}$ is a maximal independent set of $G$ such that $\{ x_1y_1 ,
 . . .  , x_ny_n\}\subset E(G)$. In fact we have the following:

\begin{prop} \label{pro:well}(\cite[Proposition 2.3]{CRT}) Let $G$ be a graph with $2n$ vertices which are
not isolated and with $\heit(I(G))=n$. We assume the conditions
$(\ast)$. Then $G$ is unmixed (very well-covered) if and only if the following conditions hold:
\begin{itemize}
\item[(i)] if $z_ix_j,y_jx_k\in E(G)$, then $z_ix_k\in E(G)$ for
distinct $i,j,k$ and for $z_i\in\{x_i,y_i\};$
\item[(ii)] if $x_iy_j\in E(G)$ then $x_ix_j\notin E(G).$
\end{itemize}
\end{prop}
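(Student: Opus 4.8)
The plan is to translate unmixedness into a statement about how maximal independent sets interact with the perfect matching $\{x_1y_1,\ldots,x_ny_n\}$. First I would record the basic counting observation: since each $x_iy_i$ is an edge, any independent set of $G$ contains at most one vertex of each pair $\{x_i,y_i\}$, so every independent set has at most $n$ vertices, and $Y$ shows this bound is attained. Because $\heit(I(G))=n$, every minimal vertex cover has at least $n$ vertices, so $G$ is unmixed precisely when no minimal vertex cover exceeds $n$; dually, $G$ is very well-covered exactly when every maximal independent set meets each pair $\{x_i,y_i\}$, equivalently has exactly $n$ elements. This is the formulation I would prove equivalent to conditions (i) and (ii).

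For necessity I would argue by contrapositive, producing in each case a maximal independent set that omits a whole pair. If (ii) fails, say $x_iy_j,x_ix_j\in E$, then extend $\{x_i\}$ to a maximal independent set $S$; since $x_i$ is adjacent to both $x_j$ and $y_j$, neither lies in $S$, so $S$ omits the pair $\{x_j,y_j\}$ and $|S|<n$. If (i) fails, so that $z_ix_j,y_jx_k\in E$ but $z_ix_k\notin E$ for some distinct $i,j,k$ and $z_i\in\{x_i,y_i\}$, then $\{z_i,x_k\}$ is independent; extending it to a maximal independent set $S$, the edge $z_ix_j$ forces $x_j\notin S$ and the edge $x_ky_j$ forces $y_j\notin S$, so again $S$ omits the pair $\{x_j,y_j\}$. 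Either way $S$ is a maximal independent set of size less than $n$, contradicting very well-coveredness.

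For sufficiency, assume (i) and (ii) and let $S$ be an arbitrary maximal independent set; I must show $S$ meets every pair. Suppose instead $S\cap\{x_i,y_i\}=\emptyset$. Maximality forces $y_i$ to have a neighbour in $S$, and since the neighbours of $y_i$ all lie in $X$, this neighbour is some $x_j\in S$ with $j\ne i$; condition (ii) applied to $x_jy_i\in E$ gives $x_ix_j\notin E$. Maximality also forces $x_i$ to have a neighbour $w\in S$, either $w=x_k$ or $w=y_k$ with $k\ne i$. In the first case I would apply condition (i) to the edges $x_kx_i$ and $y_ix_j$ to deduce $x_kx_j\in E$; in the second case I would apply (i) to $y_kx_i$ and $y_ix_j$ to deduce $y_kx_j\in E$. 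In both cases the conclusion is an edge between two vertices of $S$, contradicting the independence of $S$. Hence no pair is omitted, every maximal independent set has $n$ vertices, and $G$ is very well-covered.

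The only delicate point, and the step I expect to be the real obstacle, is the bookkeeping in the sufficiency argument: to invoke (i) one must check that the three indices involved are genuinely distinct. The distinctness $k\ne j$ is exactly what has to be verified, and it is where condition (ii) re-enters --- if $k=j$ then in the first case $x_jx_i\in E$ would contradict the $x_ix_j\notin E$ obtained above, while in the second case $y_j,x_j\in S$ would contradict $x_jy_j\in E$. Getting these index incompatibilities to line up with the hypotheses of (i), for both choices $z_i=x_i$ and $z_i=y_i$, is the crux of the proof.
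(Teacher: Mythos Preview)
Your argument is correct: the reformulation of very well-coveredness as ``every maximal independent set meets each matched pair $\{x_i,y_i\}$'' is exactly the right pivot, and your case analysis in both directions is sound, including the distinctness check $k\ne j$ that you flag as the delicate point.

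There is, however, nothing in the present paper to compare against. Proposition~\ref{pro:well} is quoted from \cite[Proposition~2.3]{CRT} and is not proved here; the paper uses it as a black box (see Remark~\ref{rem:equiv}). Your self-contained proof is in the spirit of the original argument in \cite{CRT}, which likewise analyses how minimal vertex covers (equivalently, maximal independent sets) interact with the perfect matching, so you have essentially reconstructed that result rather than diverged from it.
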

Also it is shown  in \cite[Lemma 3.5]{CRT} that if $G$ is Cohen-Macaulay, then there exists a suitable simultaneous change
of labeling on both $\{x_i\}$ and $\{y_i\}$
(i.e., we relable  $(x_{i_1}, \ldots, x_{i_n})$ and $(y_{i_1}, \ldots,
y_{i_n})$ as  $(x_1, \ldots, x_n)$ and $(y_1, \ldots, y_n)$ at
the same time), such that\\

$(\ast \ast)~x_iy_j\in E(G)$ implies $i\leq j.$\\\\
On the other hand for any graph $G$ satisfying $(\ast)$ and $(\ast \ast)$ we have the following:

\begin{thm} \label{th:CM} (\cite[Theorem 3.6]{CRT}) Let $G$ be a graph with $2n$ vertices which are
not isolated and with $\heit(I(G))=n$. We assume the conditions
$(\ast)$ and $(\ast \ast)$. Then the following conditions are
equivalent:

\begin{itemize}
\item[(1)] $G$ is Cohen-Macaulay;
\item[(2)] $G$ is unmixed (very well-covered);
\item[(3)] The following conditions hold:
\begin{itemize}
\item[(i)] if $z_ix_j,y_jx_k\in E(G)$, then $z_ix_k\in E(G)$ for
distinct $i,j,k$ and for $z_i\in\{x_i,y_i\};$
\item[(ii)] if $x_iy_j\in E(G)$ then $x_ix_j\notin E(G).$
\end{itemize}
\end{itemize}
\end{thm}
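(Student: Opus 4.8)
The plan is to dispatch the two easy equivalences first and then concentrate on the single implication carrying the real content. The equivalence $(2)\Leftrightarrow(3)$ is nothing new: it is exactly Proposition \ref{pro:well}, whose characterization of unmixedness by conditions (i) and (ii) holds under $(\ast)$ alone, and the extra labeling hypothesis $(\ast\ast)$ does not interfere. The implication $(1)\Rightarrow(2)$ is the standard fact that a Cohen--Macaulay Stanley--Reisner ring is equidimensional, so that $\Delta_G$ is pure, i.e. $G$ is unmixed. Everything thus reduces to proving $(2)\Rightarrow(1)$ (equivalently $(3)\Rightarrow(1)$): assuming $G$ unmixed and satisfying $(\ast),(\ast\ast)$, show that $R/I(G)$ is Cohen--Macaulay.

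For this I would prove the stronger statement that the independence complex $\Delta_G$ is pure shellable, which gives Cohen--Macaulayness by the implications recalled in the introduction, arguing by induction on $n$. First note that, $G$ being unmixed, every facet of $\Delta_G$ meets each matched pair $\{x_i,y_i\}$ in exactly one vertex; hence $\Delta_G$ is pure of dimension $n-1$. The engine of the induction is the leaf supplied by $(\ast\ast)$: since $Y$ is independent and $x_iy_1\in E(G)$ forces $i\le 1$, we get $N_G(y_1)=\{x_1\}$. I would take $x_1$ as a shedding vertex and analyze the deletion and the link. The deletion $\mathrm{del}_{\Delta_G}(x_1)=\Delta_{G\setminus x_1}$ is a cone, because removing $x_1$ isolates $y_1$; concretely it is the join of $\{y_1\}$ with $\Delta_{G'}$, where $G'=G\setminus\{x_1,y_1\}$ inherits $(\ast),(\ast\ast)$, (i) and (ii) with parameter $n-1$ and so is shellable by induction. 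The shedding condition is immediate from purity, since each facet $\{y_1\}\cup F$ of the deletion already has cardinality $n$ and is independent in $G$, hence is a facet of $\Delta_G$.

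The heart of the proof, and the step I expect to be the main obstacle, is the link $\mathrm{link}_{\Delta_G}(x_1)=\Delta_{G\setminus N_G[x_1]}$. Writing $A=N_G(x_1)\cap Y$ and $B=N_G(x_1)\cap X$ (disjoint in index by (ii)), deleting $N_G[x_1]$ destroys the partner of several matched pairs, and the claim on which everything hinges is that every such orphaned vertex becomes isolated in $H:=G\setminus N_G[x_1]$. This is precisely where condition (i) is used: if $x_j$ survives but its partner $y_j$ was deleted (so $x_1y_j\in E(G)$), then applying (i) to the two edges meeting at the index $j$ shows that every remaining neighbour of $x_j$ would itself be adjacent to $x_1$, hence already deleted; a symmetric application disposes of a surviving $y_j$ whose partner $x_j$ was deleted. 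Consequently $\Delta_H$ is the join of a simplex on these isolated vertices with $\Delta_{H_0}$, where $H_0$ is the induced graph on the fully surviving pairs; a dimension count shows the facets of $\Delta_H$ have size $n-1$ as required, $H_0$ again satisfies $(\ast),(\ast\ast)$, (i), (ii) with a strictly smaller parameter, and so $\Delta_H$ is shellable by induction. With deletion and link shellable and the shedding condition verified, $\Delta_G$ is shellable, completing $(2)\Rightarrow(1)$ and hence the theorem.
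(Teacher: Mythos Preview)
The paper does not prove Theorem~\ref{th:CM}; it is quoted verbatim from \cite[Theorem~3.6]{CRT} and used as a black box. So there is no ``paper's own proof'' to compare against.

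That said, your argument is correct, and it is worth pointing out that you have essentially rediscovered, and slightly repurposed, the proof of Theorem~\ref{th:equiv} in this paper. Your two key claims---that the partner of a deleted $y_j\in N_G(x_1)$ becomes isolated, and likewise for the partner of a deleted $x_j\in N_G(x_1)$---are exactly Claims~1 and~2 in the proof of Theorem~\ref{th:equiv}, proved there by the same applications of condition~(i). The only structural difference is that you aim for shellability (which suffices for Cohen--Macaulayness and hence for $(2)\Rightarrow(1)$), whereas Theorem~\ref{th:equiv} pushes the same induction through to vertex decomposability via Lemma~\ref{lm:ver}. In fact your argument, read as a link--deletion induction with the shedding condition verified, already gives vertex decomposability with no extra work; so you have in effect reproved both Theorem~\ref{th:CM} and Theorem~\ref{th:equiv} simultaneously, without ever invoking \cite{CRT}. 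In the paper's logic, by contrast, Theorem~\ref{th:equiv} \emph{assumes} Theorem~\ref{th:CM} in order to obtain the labeling $(\ast\ast)$ and $\deg(y_1)=1$ from the Cohen--Macaulay hypothesis; your route is more self-contained because $(\ast\ast)$ is already part of the hypotheses of Theorem~\ref{th:CM}.
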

In the next remark we restate Proposition \ref{pro:well} and Theorem \ref{th:CM} as we will use throughout the paper.
\begin{rem} \em \label{rem:equiv} The above theorem shows that if $G$ is a graph with $2n$ vertices which are
not isolated and with $\heit(I(G))=n$, then
$$\mbox{$G$ is unmixed (very well-covered) $\Longleftrightarrow$ $G$ satisfies ($\ast$), and (i), (ii) of Proposition \ref{pro:well}}$$
$$\mbox{ $G$ is Cohen-Macaulay $\Longleftrightarrow$ $G$ satisfies ($\ast$), ($\ast\, \ast$), and (i), (ii) of Proposition \ref{pro:well}}$$
\end{rem}

Class of very well-covered graphs contains unmixed bipartite graphs which have no isolated vertices.
Van Tuyl in \cite[Corollary 2.12]{VT} showed that for a bipartite graph $G$, we have:
$$\mbox {$G$ Cohen-Macaulay} \Longleftrightarrow \mbox {$G$ pure shellable}\Longleftrightarrow\mbox {$G$ pure vertex~decomposable}.$$
On the other hand, the regularity of unmixed bipartite graphs have been studied in Kummini's work \cite{Ku}. He showed that if $G$ is an  unmixed bipartite graph, then $\reg(R/I(G))$ is equal to $a(G)$, where $a(G)$ is the maximum number of pairwise 3-disjoint edges of $G$, see \cite[Theorem 1.1]{Ku}.\\
It is natural to think on generalization of Van-Tuyl and Kummini's  results to the class of very well-covered graphs.
Recently the authors in \cite{CRT} showed that a graph $G$ in this class is Cohen-Macaulay
if and only if it is pure shellable, see \cite[Theorem 4.1]{CRT}.

The main results of the paper are the following theorems. \\
{\bf Theorem A.} Let $G$ be a very well-covered graph. Then the following conditions are equivalent:
\begin{itemize}
\item[{\bf (1)}] $G$ is Cohen-Macaulay.
\item[{\bf (2)}] $G$ is pure shellable.
\item[{\bf (3)}] $G$ is pure vertex decomposable.\\
\end{itemize}
{\bf Theorem B.} Let $G$ be a very well-covered graph. Then $\reg(R/I(G))=a(G)$.\\\\
These results improve the results of \cite{CRT}, \cite{Ku}, and \cite{VT}. More precisely Theorem A improves the result of \cite[Theorem 4.1]{CRT} and \cite[Corollary 2.12]{VT}, and Theorem B is a generalization of \cite[Theorem 1.1]{Ku}. \\\\

\section{Basic definitions and notations}

In this section we recall all the definitions and properties we will
use throughout the paper.

\begin{defn}{\bf (Pairwise 3-disjoint set of edges)} Let $G$ be a graph. Two edges $xy$ and
$uv$ of $G$ are called 3-disjoint if the induced
subgraph of $G$ on $\{x,y,u,v\}$ consists of exactly two disjoint
edges. A set $\Gamma$ of edges of $G$ is called pairwise
3-disjoint set of edges if any two edges of $\Gamma$ are
3-disjoint.\end{defn}

The \textit{maximum cardinality} of all pairwise 3-disjoint
sets of edges in $G$ is denoted by $a(G)$.\\\\
For a set $F\subseteq\{x_1, . . . ,x_n\}$, let
$$x_F=\prod _{x_i\in F} x_i.$$

\begin{defn}\label{def:cover}{\bf (Cover ideal of a graph)} Let $G$ be a graph over the vertex set $V(G)=\{x_1, . . . ,x_n\}$.  The cover ideal of $G$ denoted by
$I(G)^\vee$ is the square-free monomial ideal
$$I(G)^\vee=(x_F~|~ F~is~a~minimal~vertex~cover~of~G~).$$
\end{defn}

Let M be an arbitrary graded R-module, and let
$$0\rightarrow\bigoplus_j{R(-j)^{\beta_{t,j}(M)}}\rightarrow\bigoplus_j{R(-j)^{\beta_{t-1,j}(M)}}\rightarrow
\cdots\rightarrow\bigoplus_j{R(-j)^{\beta_{0,j}(M)}}\rightarrow
M\rightarrow 0$$
be the unique minimal graded free resolution of M over R,
where $R(-j)$ is a graded free R-module whose $n$-th graded
component is given by $R_{n-j}$.

The number $\beta_{i,j}(M)$ is called the $ij$-th graded Betti number of $M$ and it is equal to the
number of generators of degree $j$ in the $i$-th syzygy module.

The {\bf Castelnuovo-Mumford regularity of M}, denoted by $\reg(M)$, is
defined as follows:
$$\reg (M) = \max\{j-i~|~\beta_{i,j}(M)\neq 0 \}.$$

The {\bf projective dimension} of an $R$-module $M$,
denoted by $\pd (M)$, is the length of the minimal free resolution
of $M$, that is,
$$\pd (M) = \max\{i~|~\beta_{i,j}(M)\neq 0~\text{for some}~j\}.$$

Katzman provided the following result on the regularity of
$R/I(G)$.

\begin{lem} \label{lem:Katz}(\cite[Lemma 2.2]{K}) For any graph $G$, $\reg (R/I(G))\geq
a(G)$.
\end{lem}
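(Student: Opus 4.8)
The plan is to derive the inequality from Hochster's formula for the graded Betti numbers of the Stanley--Reisner ring $R/I(G) = k[\Delta_G]$, where $\Delta_G$ is the independence complex of $G$. Recall that Hochster's formula reads
\[
\beta_{i,j}(R/I(G)) = \sum_{\substack{W \subseteq V(G) \\ |W| = j}} \dim_k \tilde{H}_{j-i-1}(\Delta_G|_W;\, k),
\]
where $\Delta_G|_W$ is the subcomplex of $\Delta_G$ induced on the vertex set $W$; note that $\Delta_G|_W$ is exactly the independence complex of the induced subgraph $G[W]$. Since $\reg(R/I(G)) = \max\{\, j - i \mid \beta_{i,j}(R/I(G)) \neq 0 \,\}$, it suffices to produce a single vertex subset $W$ and an index $p$ with $\tilde{H}_p(\Delta_G|_W;\, k) \neq 0$: taking $j = |W|$ and $i = |W| - p - 1$ then gives $\beta_{i,j}(R/I(G)) \neq 0$, and hence $\reg(R/I(G)) \geq j - i = p + 1$.

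First I would fix a pairwise $3$-disjoint set $\Gamma = \{e_1, \dots, e_a\}$ of edges of maximum cardinality $a = a(G)$, and let $W = e_1 \cup \dots \cup e_a$ be the set of its $2a$ endpoints. I claim that the induced subgraph $G[W]$ is precisely the disjoint union of the $a$ edges $e_1, \dots, e_a$. Indeed, the $e_i$ are pairwise vertex-disjoint (if two of them shared a vertex they would span only three vertices and could not induce two disjoint edges), and $G[W]$ contains no further edge $f$: such an $f$ would join endpoints of two distinct edges $e_i, e_j$, so that $G$ restricted to $e_i \cup e_j$ would contain at least three edges, contradicting the $3$-disjointness of $e_i$ and $e_j$. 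Consequently $\Delta_G|_W = \Delta_{G[W]}$ is the independence complex of a disjoint union of $a$ edges, which is the simplicial join of $a$ copies of the independence complex of a single edge.

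The independence complex of one edge consists of two isolated vertices, i.e.\ a copy of $S^0$, whose only nonvanishing reduced homology is $\tilde{H}_0(S^0;\, k) = k$. Using the Künneth formula for joins (or iterating the homeomorphism $S^p * S^q \cong S^{p+q+1}$), the join of $a$ copies of $S^0$ is homeomorphic to the sphere $S^{a-1}$, so $\tilde{H}_{a-1}(\Delta_G|_W;\, k) = k \neq 0$. Applying the reduction of the first paragraph with $p = a-1$ and $j = |W| = 2a$, we obtain $i = 2a - (a-1) - 1 = a$, so $\beta_{a,\,2a}(R/I(G)) \neq 0$ and therefore $\reg(R/I(G)) \geq 2a - a = a = a(G)$, as desired. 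The only genuinely non-routine point is the homology computation, namely identifying $\Delta_G|_W$ with a join of $0$-spheres and reading off that its reduced homology is concentrated in degree $a-1$; the remaining ingredients are the bookkeeping with Hochster's formula and the elementary verification that a maximum pairwise $3$-disjoint set induces a disjoint union of edges.
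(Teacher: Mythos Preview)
Your argument is correct: the identification of $G[W]$ with a disjoint union of $a$ edges, the join decomposition of $\Delta_G|_W$ as $(S^0)^{*a}\cong S^{a-1}$, and the application of Hochster's formula to conclude $\beta_{a,2a}(R/I(G))\neq 0$ are all valid, and together they give $\reg(R/I(G))\geq a(G)$.

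There is nothing to compare against in the present paper: the authors simply quote the inequality from \cite[Lemma~2.2]{K} without reproducing a proof. For what it is worth, your proof is essentially Katzman's original argument; he also restricts to the vertex set supporting a maximal pairwise $3$-disjoint matching (his ``induced matching''), observes that the induced subcomplex is the boundary of a cross-polytope (equivalently, a join of $0$-spheres), and reads off the nonvanishing Betti number from Hochster's formula. So your write-up would serve perfectly well as a self-contained replacement for the bare citation.
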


A simplicial complex $\Delta$ is called \textit{shellable} if the facets
can give a linear order $F_1 ,  . . .  , F_s$ such that for all
$1\leq i<j\leq s$, there exists some $v\in F_j\setminus F_i$ and
some $l\in \{1 ,  . . .  , j-1\}$ with $F_j\setminus F_l=\{v\}$,
\cite{BW}. A \textit{graph} $G$ is called \textit{shellable}, if the simplicial
complex $\Delta_G$ is a shellable simplicial complex.

Let $F\in\Delta$ be a face of $\Delta$. The link of $F$ is the
simplicial complex
$$\textnormal{lk}_\Delta(F)=\{F'\in\Delta |F'\cap F=\emptyset~,~F'\cup F\in \Delta\}$$
and the deletion of $F$ is the simplicial complex
$$\textnormal{del}_\Delta(F)=\{F'\in\Delta~|~F'\cap F=\emptyset\}.$$

If $\Delta$ is a simplicial complex with facets $F_1, . . . ,F_t$,
we denote $\Delta$ by $\langle F_1, . . . ,F_t \rangle$, and
$\{F_1, . . . ,F_t\}$ is called the facet set of $\Delta$. The
facet ideal of $\Delta$ is the square-free monomial ideal
$$\mathcal{F}(\Delta)=(x_F~|~F~\text{is a facet of}~\Delta).$$

A simplicial complex $\Delta$ on the vertex set $V=\{ x_1 ,  . . .
, x_n\}$ is called \textit{vertex decomposable} if either:
\begin{itemize}
\item[{\bf (i)}] $\Delta=\langle\{ x_1 ,  . . .
, x_n\}\rangle$, or $\Delta=\emptyset.$
\item[{\bf (ii)}] There exists some $x\in V$ such that
$\textnormal{lk}_\Delta(\{x\})$ and $\textnormal{del}_\Delta(\{x\})$ are vertex
decomposable, and every facet of $\textnormal{del}_\Delta(\{x\})$ is a facet
of $\Delta$.
\end{itemize}

A graph $G$ is vertex decomposable if the simplicial complex
$\Delta_G$ is vertex decomposable. It is known that a graph $G$ is vertex decomposable if and
only if its connected components are vertex decomposable.

For $S\subseteq V(G)$ we denote by $G\setminus S$ the subgraph of
$G$ obtained by removing all vertices of $S$ from $G$. Moreover, for
any $x\in V(G)$ we denote by $N_G(x)$ the neighbor set of $x$
in $G$, i.e. $N_G(x)=\{y\in V(G)~|~ xy\in E(G)\}$. The
following lemma will be crucial in the proof of our main results.

\begin{lem} \label{lm:ver} (\cite[Lemma 4.2]{DE}) Let $G$ be a graph and suppose
that $x,y\in V(G)$ are two vertices such that $\{x\}\cup
N_G(x)\subseteq \{y\}\cup N_G(y)$. If $G\setminus\{y\}$ and
$G\setminus (\{y\}\cup N_G(y))$ are both vertex decomposable,
then $G$ is vertex decomposable.
\end{lem}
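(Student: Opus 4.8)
The plan is to use $y$ itself as a shedding vertex of the independence complex $\Delta_G$, so that the conclusion follows directly from part (ii) of the definition of vertex decomposability. Since $x,y$ are two distinct vertices, I first record the two standard identifications of the deletion and the link of a single vertex in an independence complex. The deletion $\textnormal{del}_{\Delta_G}(\{y\})$ is exactly $\Delta_{G\setminus\{y\}}$, because removing $y$ from the vertex set leaves precisely the independent sets avoiding $y$. The link $\textnormal{lk}_{\Delta_G}(\{y\})$ consists of the independent sets $F$ with $y\notin F$ and $F\cup\{y\}$ still independent, which are exactly the independent sets of $G$ avoiding $y$ together with all its neighbors, so $\textnormal{lk}_{\Delta_G}(\{y\})=\Delta_{G\setminus(\{y\}\cup N_G(y))}$. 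By hypothesis both $G\setminus\{y\}$ and $G\setminus(\{y\}\cup N_G(y))$ are vertex decomposable, hence both the deletion and the link of $\{y\}$ in $\Delta_G$ are vertex decomposable.

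It then remains only to verify the shedding condition: every facet of $\textnormal{del}_{\Delta_G}(\{y\})$ is a facet of $\Delta_G$. A facet of $\textnormal{del}_{\Delta_G}(\{y\})$ is a maximal independent set $F$ of $G\setminus\{y\}$, and such an $F$ is automatically independent in $G$; the only vertex whose addition could enlarge $F$ inside $\Delta_G$ is $y$. Thus $F$ is a facet of $\Delta_G$ exactly when $F\cup\{y\}$ is not independent, i.e. when $F\cap N_G(y)\neq\emptyset$. This is the heart of the argument and the only place where the containment $\{x\}\cup N_G(x)\subseteq\{y\}\cup N_G(y)$ is used, so I expect it to be the main obstacle.

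To establish $F\cap N_G(y)\neq\emptyset$ I would argue by cases on $x$, first noting that $x\neq y$ forces $x\in N_G(y)$ since $x\in\{x\}\cup N_G(x)\subseteq\{y\}\cup N_G(y)$. If $x\in F$, then $x\in F\cap N_G(y)$ and we are done. If $x\notin F$, then by maximality of $F$ in $G\setminus\{y\}$ the set $F\cup\{x\}$ is not independent, so $x$ has a neighbor $w\in F$; then $w\in N_G(x)\subseteq\{y\}\cup N_G(y)$, and because $w\in F$ while $y\notin F$ we have $w\neq y$, hence $w\in N_G(y)$, giving $F\cap N_G(y)\neq\emptyset$ again. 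In either case $F$ is a facet of $\Delta_G$, so $y$ is a shedding vertex, and the definition of vertex decomposability now shows that $\Delta_G$, and therefore $G$, is vertex decomposable. The remaining identifications of the link and deletion are routine, so the entire content of the proof is concentrated in the shedding step above.
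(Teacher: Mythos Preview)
Your argument is correct: you use $y$ as a shedding vertex, identify $\textnormal{del}_{\Delta_G}(\{y\})=\Delta_{G\setminus\{y\}}$ and $\textnormal{lk}_{\Delta_G}(\{y\})=\Delta_{G\setminus(\{y\}\cup N_G(y))}$, and then verify the shedding condition via the containment $\{x\}\cup N_G(x)\subseteq\{y\}\cup N_G(y)$ exactly as needed. The case split on whether $x\in F$ is clean, and the use of maximality of $F$ in $G\setminus\{y\}$ to produce a neighbor $w\in F\cap N_G(y)$ when $x\notin F$ is the right move.

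There is, however, nothing in the paper to compare against: the lemma is merely quoted from \cite[Lemma~4.2]{DE} and no proof is given here. Your write-up is essentially the standard proof one finds in the Dochtermann--Engstr\"om source, so in that sense it matches the intended argument behind the citation.
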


\begin{rem}\em Let $I$ be a square-free monomial ideal and $\Delta$
be a simplicial complex such that $I=\mathcal{F}(\Delta)$. Then
the Alexander dual of $I$ is the ideal
$$I^\vee=(x_F~|~F~is~a~minimal~vertex~cover~of~\Delta).$$
\end{rem}
Notice that $I(G)^\vee$, the cover ideal of a graph $G$ in
Definition \ref{def:cover}, is the Alexander dual of the edge ideal $I(G)$.\\

We require the following result of Terai \cite {T} about the
Alexander dual of a square-free monomial ideal.

\begin{thm}\label{th:dual} Let $I$ be a square-free monomial ideal.
Then $\reg(R/I)=\pd(I^\vee)$.
\end{thm}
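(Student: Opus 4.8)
The plan is to translate both sides into the topology of the associated simplicial complex and to show that each one is controlled by the reduced homology of the \emph{links} of $\Delta$. Write $I=I_\Delta$, where $\Delta=\{F\subseteq[n] : x_F\notin I\}$ is the Stanley--Reisner complex of the square-free ideal $I$; by the remark preceding the statement, $I^\vee=I_{\Delta^\vee}$ is the Stanley--Reisner ideal of the Alexander dual $\Delta^\vee=\{F\subseteq[n] : [n]\setminus F\notin\Delta\}$. Both $\reg(R/I)$ and $\pd(I^\vee)$ are read off the $\mathbb{Z}^n$-graded Betti numbers or local cohomology of $k[\Delta]$, which by Hochster's formulas are governed by reduced simplicial (co)homology of subcomplexes of $\Delta$. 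The engine linking the two complexes is combinatorial Alexander duality: for a complex $\Gamma$ on a vertex set $W$ with $|W|=w$ one has $\widetilde{H}_{j}(\Gamma^{\vee_W};k)\cong\widetilde{H}^{\,w-j-3}(\Gamma;k)$.

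First I would handle $\pd(I^\vee)$. Since $\pd(I^\vee)=\pd(R/I^\vee)-1$, I locate the top homological degree of the resolution of $k[\Delta^\vee]$. Hochster's formula gives $\beta_{i,\sigma}(k[\Delta^\vee])=\dim_k\widetilde{H}_{|\sigma|-i-1}((\Delta^\vee)|_\sigma;k)$ for $\sigma\subseteq[n]$. Using the combinatorial identity
\[
(\Delta^\vee)|_\sigma=\bigl(\mathrm{lk}_\Delta(\bar\sigma)\bigr)^{\vee_\sigma},\qquad \bar\sigma=[n]\setminus\sigma,
\]
together with Alexander duality inside $\sigma$, this rewrites as $\beta_{i,\sigma}(k[\Delta^\vee])=\dim_k\widetilde{H}^{\,i-2}(\mathrm{lk}_\Delta(\bar\sigma);k)$. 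Letting $F=\bar\sigma$ range over all subsets (the link being void, hence acyclic, unless $F\in\Delta$), I conclude
\[
\pd(I^\vee)=1+\max\{\,m : \widetilde{H}^{\,m}(\mathrm{lk}_\Delta F;k)\neq 0\ \text{for some}\ F\in\Delta\,\}.
\]

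Next I would compute $\reg(R/I)$ through local cohomology rather than through the free resolution, so that it too is funnelled onto links of $\Delta$. From $\reg(R/I)=\max_\ell\{\ell+a_\ell(R/I)\}$ with $a_\ell=\max\{j : H^\ell_{\mathfrak m}(R/I)_j\neq 0\}$, and the graded Hochster formula $H^\ell_{\mathfrak m}(k[\Delta])_{-a}\cong\widetilde{H}_{\ell-|F|-1}(\mathrm{lk}_\Delta F;k)$ with $F=\mathrm{supp}(a)$, the top degree is attained at square-free $a$ (that is, $|a|=|F|$), which yields
\[
\reg(R/I)=1+\max\{\,m : \widetilde{H}_{m}(\mathrm{lk}_\Delta F;k)\neq 0\ \text{for some}\ F\in\Delta\,\}.
\]
Since $\dim_k\widetilde{H}_m=\dim_k\widetilde{H}^{\,m}$ over the field $k$, the two displayed maxima coincide and the theorem follows.

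The main obstacle is not a single hard estimate but the disciplined bookkeeping of (co)homological degrees through Hochster's formula and Alexander duality, and especially the insight that both invariants must be channelled onto the \emph{same} family of subcomplexes. Computing $\reg(R/I)$ naively from the minimal free resolution produces reduced homology of \emph{induced} subcomplexes $\Delta|_W$, whereas $\pd(I^\vee)$ naturally produces \emph{links} $\mathrm{lk}_\Delta F$; these are genuinely different families and matching their extremal nonvanishing degrees directly is delicate. Routing $\reg$ through local cohomology realigns it with the links, and this reconciliation is precisely the step where I expect the real care to be needed.
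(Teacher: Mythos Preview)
The paper does not prove this statement at all: Theorem~\ref{th:dual} is quoted from Terai \cite{T} and used as a black box, so there is no ``paper's own proof'' to compare against.

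Your argument is correct and is essentially Terai's original approach. Both invariants are expressed, via the two Hochster formulas, as $1+\max\{m:\widetilde H_m(\mathrm{lk}_\Delta F;k)\neq 0\ \text{for some}\ F\in\Delta\}$, and equality follows. The combinatorial identity $(\Delta^\vee)|_\sigma=(\mathrm{lk}_\Delta\bar\sigma)^{\vee_\sigma}$ and the Alexander duality isomorphism you invoke are standard, and your observation that the extremal graded piece of $H^\ell_{\mathfrak m}(k[\Delta])$ supported on $F$ occurs in degree $-\mathbf{1}_F$ (total degree $-|F|$) is exactly what pins down $a_\ell$. One cosmetic point: the local-cohomology Hochster formula is usually stated with reduced \emph{co}homology $\widetilde H^{\ell-|F|-1}(\mathrm{lk}_\Delta F;k)$ rather than homology, but over a field the dimensions agree, so your final identification goes through unchanged. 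Your closing remark about why one routes $\reg$ through local cohomology rather than the free resolution (to land on links rather than induced subcomplexes) is precisely the conceptual content of the proof.
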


\section{Cohen-Macaulay case}
Throughout this section let $G=(V(G),E(G))$ be a very well-covered graph with $2n$
vertices. Hence we may assume $(\ast)$. In this section we first prove that $G$ is Cohen-Macaulay if and only if
it is vertex decomposable. Moreover we show that if $G$ is
Cohen-Macaulay, then the regularity of $R/I(G)$ is equal to the maximum
number of pairwise 3-disjoint edges of $G$. We will use this result to prove the main theorem of the next section.

\begin{thm} \label{th:equiv} Let $G$ be a very well-covered graph with $2n$ vertices. Then the following conditions are equivalent:
\begin{itemize}
\item[{\bf (1)}] $G$ is Cohen-Macaulay.
\item[{\bf (2)}] $G$ is pure shellable.
\item[{\bf (3)}] $G$ is pure vertex decomposable.
\end{itemize}
\end{thm}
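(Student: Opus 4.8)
The plan is to prove the cyclic chain of implications
$$\textbf{(3)} \Longrightarrow \textbf{(2)} \Longrightarrow \textbf{(1)} \Longrightarrow \textbf{(3)}.$$
The first two implications are free: the general implications
$$\mbox{pure vertex decomposable} \Longrightarrow \mbox{pure shellable} \Longrightarrow \mbox{Cohen-Macaulay}$$
were already recorded in the Introduction, so $\textbf{(3)} \Rightarrow \textbf{(2)} \Rightarrow \textbf{(1)}$ requires no further argument in the very well-covered setting. Thus the entire content of the theorem is the single implication $\textbf{(1)} \Rightarrow \textbf{(3)}$: a Cohen-Macaulay very well-covered graph is pure vertex decomposable. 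Purity is automatic for an unmixed graph (all facets of $\Delta_G$ have cardinality $n$ by the very well-covered hypothesis), so the real work is to establish vertex decomposability of $\Delta_G$.

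For the core implication I would first invoke Theorem~\ref{th:CM}: after a suitable simultaneous relabeling we may assume $G$ satisfies $(\ast)$ and $(\ast\ast)$, so that $x_iy_j \in E(G)$ forces $i \leq j$, together with conditions (i) and (ii) of Proposition~\ref{pro:well}. The natural strategy is induction on $n$, using Lemma~\ref{lm:ver} as the decomposition engine. The key is to locate a pair of vertices $x, y$ with $\{x\} \cup N_G(x) \subseteq \{y\} \cup N_G(y)$. I expect the right choice to be built around the vertex $x_n$ (the ``largest'' index under $(\ast\ast)$): because of $(\ast\ast)$ the vertex $y_n$ can only be adjacent to various $x_i$'s, while $x_n$ is its forced partner $x_ny_n \in E(G)$. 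I would aim to show that $x_n$ is a leaf-like vertex whose closed neighborhood sits inside that of $y_n$, or symmetrically that the closed neighborhood of $y_n$ is contained in that of some $x_i$, so that Lemma~\ref{lm:ver} applies with this pair.

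The two subgraphs that Lemma~\ref{lm:ver} demands be vertex decomposable are $G \setminus \{y\}$ and $G \setminus (\{y\} \cup N_G(y))$. The plan is to check that each of these is again a very well-covered Cohen-Macaulay graph (on fewer vertices, hence handled by the induction hypothesis) or else is a disjoint union of an isolated-vertex-free Cohen-Macaulay graph with some isolated vertices. Here I would lean on the structural conditions (i) and (ii): condition (i) is precisely the transitivity that guarantees the neighborhoods interact coherently after deletion, and it should be what propagates the very well-covered property to the residual graphs. I would also use that vertex decomposability of a graph is equivalent to that of its connected components, as noted in the excerpt, to discard isolated vertices cleanly and to treat $G \setminus (\{y\} \cup N_G(y))$ component by component.

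The main obstacle will be verifying that the deletions $G \setminus \{y_n\}$ and $G \setminus (\{y_n\} \cup N_G(y_n))$ genuinely remain in the class to which induction applies---that is, confirming they are unmixed of the correct height with no stray isolated vertices, and that the relabeling hypothesis $(\ast\ast)$ is preserved (or can be restored) on the smaller graphs. This bookkeeping, rather than the application of Lemma~\ref{lm:ver} itself, is where the conditions (i) and (ii) must be deployed carefully; in particular ensuring that removing the closed neighborhood of $y_n$ does not create an isolated vertex that would violate the ``no isolated vertices'' requirement of the very well-covered definition is the delicate point, and condition (ii) (forbidding $x_ix_j$ when $x_iy_j \in E(G)$) is the tool I expect to need there.
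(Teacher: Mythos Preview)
Your overall architecture is exactly that of the paper: the implications $(3)\Rightarrow(2)\Rightarrow(1)$ are general, and $(1)\Rightarrow(3)$ is proved by induction on $n$ using Lemma~\ref{lm:ver} as the engine. The gap is in your choice of shedding pair. You propose $x=x_n$ and $y=y_n$, hoping that $x_n$ is ``leaf-like'' and that $\{x_n\}\cup N_G(x_n)\subseteq\{y_n\}\cup N_G(y_n)$. Under $(\ast\ast)$ it is true that the only $y$-neighbour of $x_n$ is $y_n$, but $x_n$ may very well have neighbours among the $x_i$ (the graph is not bipartite in general). Worse, condition (ii) says that $x_ix_n\in E(G)$ forces $x_iy_n\notin E(G)$, so any such $x_i$ lies in $N_G(x_n)\setminus\bigl(\{y_n\}\cup N_G(y_n)\bigr)$ and the required containment fails. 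The symmetric attempt $\{y_n\}\cup N_G(y_n)\subseteq\{x_n\}\cup N_G(x_n)$ fails for the same reason. Thus Lemma~\ref{lm:ver} cannot be applied with the pair $(x_n,y_n)$.

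The paper instead works at the \emph{other} end of the order: since $Y$ is independent and $(\ast\ast)$ gives $x_iy_1\in E(G)\Rightarrow i\le 1$, the vertex $y_1$ genuinely has degree~$1$ with $N_G(y_1)=\{x_1\}$, so $\{y_1\}\cup N_G(y_1)=\{y_1,x_1\}\subseteq\{x_1\}\cup N_G(x_1)$ is automatic. One then applies Lemma~\ref{lm:ver} with $y=x_1$, and must show that $G\setminus\{x_1\}$ and $G\setminus(\{x_1\}\cup N_G(x_1))$ are vertex decomposable. For the first, $y_1$ becomes isolated and $G\setminus\{x_1,y_1\}$ is again Cohen--Macaulay very well-covered on $2(n-1)$ vertices. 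For the second, the point is not to \emph{avoid} isolated vertices but to \emph{identify} them: using (i) one shows that whenever $x_t\in N_G(x_1)$ (resp.\ $y_t\in N_G(x_1)$) the partner $y_t$ (resp.\ $x_t$) becomes isolated in $G\setminus(\{x_1\}\cup N_G(x_1))$; stripping these off leaves a Cohen--Macaulay very well-covered graph of smaller size, and induction finishes. So your instinct about where the structural conditions enter is right, but both the vertex chosen and the role of isolated vertices need to be corrected.
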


\begin{proof} $(3)\Rightarrow (2)\Rightarrow (1)$ always hold for any graph $G$. So it suffices to prove $(1)\Rightarrow (3)$.

We prove the assertion by induction on $n$. If $n=1$, then $G$ is
just an edge and there is nothing to prove. So suppose $n>1$. By Theorem \ref{th:CM} we
may assume $(*)$,$(**)$, $\textnormal{deg}(y_1)=1$ and $x_1y_1\in E(G)$. \\
We have that $\{y_1\}\cup N_G(y_1)\subseteq \{x_1\}\cup N_G(x_1)$. By Lemma \ref{lm:ver} it is enough
to show that $G\setminus \{x_1\}$ and $G\setminus (\{x_1\}\cup
N_G(x_1))$ are vertex decomposable. It is clear that $G\setminus
\{x_1,y_1\}$ has even number of vertices which are not isolated
with $\heit(I(G\setminus \{x_1,y_1\}))=n-1$. It follows from Theorem
1.2 that $G\setminus \{x_1,y_1\}$ is Cohen-Macaulay. Now induction
hypothesis implies that $G\setminus \{x_1,y_1\}$ is vertex
decomposable. Since $\{y_1\}$ is isolated, $G\setminus \{x_1\}$
is vertex decomposable. \\
Now we show that $G\setminus (\{x_1\}\cup
N_G(x_1))$ is vertex decomposable. We first prove the following
claims.\\
{\bf Claim 1.} If $x_t \in N_G(x_1)$, then $y_t$ is isolated in
$G\setminus (\{x_1\}\cup N_G(x_1))$.\\
{\bf Claim 2.} If $y_t \in N_G(x_1)$, then $x_t$ is isolated
in $G\setminus (\{x_1\}\cup N_G(x_1))$.\\\\
\textit{Proof of Claim 1.} Suppose $x_t \in N_G(x_1)$. If $y_t$ is not isolated in $G\setminus (\{x_1\}\cup N_G(x_1))$, then there exists an integer $k$ such that $x_ky_t\in E(G\setminus (\{x_1\}\cup N_G(x_1)))$. From Theorem \ref{th:CM}, (3),(i), we get that $x_1x_k\in E(G)$ and hence $x_k\in N_G(x_1)$. This implies that $x_k\notin V(G\setminus (\{x_1\}\cup N_G(x_1)))$ but $x_ky_t\in E(G\setminus (\{x_1\}\cup N_G(x_1)))$ which is impossible.\\
\textit{Proof of Claim 2.} Suppose $y_t \in N_G(x_1)$ but $x_t$ is not isolated in
$G\setminus (\{x_1\}\cup N_G(x_1))$. If $x_kx_t\in
E(G\setminus (\{x_1\}\cup N_G(x_1)))$ for some $k$, then we get
$x_1x_k\in E(G)$ and so $x_k \in N_G(x_1)$, a contradiction.\\
If $x_ty_k\in E(G\setminus (\{x_1\}\cup N_G(x_1)))$ for
some $k$, then we must have $x_1y_k\in E(G)$ and hence $y_k
\in N_G(x_1)$. This shows that $y_k\notin V(G\setminus
(\{x_1\}\cup N_G(x_1)))$ but $x_ty_k\in E(G\setminus
(\{x_1\}\cup N_G(x_1)))$ which is impossible.

The above statements show that
$$H=(G\setminus (\{x_1\}\cup N_G(x_1)))\setminus\{\text{isolated vertices of }G\setminus (\{x_1\}\cup N_G(x_1))\}$$
has even number of vertices which are not isolated and its height
is half of the number of vertices. It follows from Remark \ref{rem:equiv}
that $H$ is Cohen-Macaulay and so it is vertex decomposable by
induction. Therefore $G\setminus (\{x_1\}\cup N_G(x_1))$ is vertex
decomposable.
\end{proof}

Now we study the Castelnuovo-Mumford regularity of a Cohen-Macaulay very well-covered graphs with $2n$ vertices. Since this type of graphs contains the set of Cohen-Macaulay bipartite graphs, our result generalizes the
same well-known result on Cohen-Macaulay bipartite graphs.

\begin{thm} \label{thm:reg} Let $G$ be a very well-covered graph with $2n$ vertices. If $G$ is Cohen-Macaulay, then $\reg(R/I(G))=a(G)$.
\end{thm}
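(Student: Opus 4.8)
The plan is to combine Katzman's lower bound (Lemma \ref{lem:Katz}), which already gives $\reg(R/I(G)) \ge a(G)$, with a matching upper bound $\reg(R/I(G)) \le a(G)$ proved by induction on $n$. The base case $n=1$ is a single edge, where $\reg(R/I(G)) = 1 = a(G)$. For the inductive step I would invoke exactly the set-up used in the proof of Theorem \ref{th:equiv}: by Theorem \ref{th:CM} we may assume $(\ast)$, $(\ast\ast)$, together with $x_1y_1 \in E(G)$ and $\deg(y_1) = 1$, so that $N_G(y_1) = \{x_1\}$.

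The engine of the induction is the short exact sequence associated to the vertex $x_1$,
$$0 \longrightarrow \bigl(R/(I(G):x_1)\bigr)(-1) \xrightarrow{\ \cdot x_1\ } R/I(G) \longrightarrow R/(I(G),x_1) \longrightarrow 0,$$
which yields $\reg(R/I(G)) \le \max\{\reg(R/(I(G):x_1))+1,\ \reg(R/(I(G),x_1))\}$. I would then identify the two outer terms with edge ideals of smaller very well-covered graphs. On one hand $(I(G):x_1) = (N_G(x_1)) + I(H')$, where $H' = G \setminus (\{x_1\}\cup N_G(x_1))$; since modding out by the linear forms in $N_G(x_1)$ and by the isolated vertices of $H'$ does not change the regularity, $\reg(R/(I(G):x_1)) = \reg(R_H/I(H))$, where $H$ is $H'$ with its isolated vertices deleted. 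By the proof of Theorem \ref{th:equiv}, $H$ is a Cohen--Macaulay very well-covered graph on fewer than $2n$ vertices, so the induction hypothesis gives $\reg(R/(I(G):x_1)) = a(H)$. On the other hand $(I(G),x_1) = (x_1) + I(G\setminus x_1)$, and since removing $x_1$ leaves $y_1$ isolated, the non-isolated part of $G\setminus x_1$ is $G'' := G\setminus\{x_1,y_1\}$; again by the proof of Theorem \ref{th:equiv} this is a Cohen--Macaulay very well-covered graph on $2(n-1)$ vertices, so $\reg(R/(I(G),x_1)) = a(G'')$ by induction.

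The heart of the argument is then the combinatorial comparison of these invariants with $a(G)$. First, $a(G'') \le a(G)$ is immediate, as any pairwise $3$-disjoint set of edges of the induced subgraph $G''$ is one in $G$. Second, and this is the step requiring care, I claim $a(H)+1 \le a(G)$: starting from a maximum pairwise $3$-disjoint family in $H$, I would adjoin the edge $x_1y_1$ and check that the enlarged family is still pairwise $3$-disjoint. This uses that every edge of $H$ lies in $G\setminus(\{x_1\}\cup N_G(x_1))$, so its endpoints are non-adjacent to $x_1$, while $\deg(y_1)=1$ forces $y_1$ to be non-adjacent to every such endpoint; hence the induced subgraph on $x_1,y_1$ and any $H$-edge consists of exactly two disjoint edges. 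Feeding these two inequalities back gives
$$\reg(R/I(G)) \le \max\{a(H)+1,\ a(G'')\} \le a(G),$$
which together with Lemma \ref{lem:Katz} closes the induction. The main obstacle I anticipate is the bookkeeping in identifying the colon and quotient ideals with the edge ideals of the correct subgraphs (and verifying that the auxiliary linear forms and isolated vertices leave the regularity unchanged), together with the $3$-disjointness verification that produces the ``$+1$'' in $a(H)+1 \le a(G)$.
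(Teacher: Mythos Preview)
Your argument is correct, and its overall architecture mirrors the paper's: induction on $n$, the labeling from Theorem~\ref{th:CM} giving $\deg(y_1)=1$, reduction to the two subgraphs $G\setminus(\{x_1\}\cup N_G(x_1))$ and $G\setminus\{x_1,y_1\}$ (which the proof of Theorem~\ref{th:equiv} shows are Cohen--Macaulay very well-covered), and the same two combinatorial inequalities $a(G'')\le a(G)$ and $a(H)+1\le a(G)$ via adjoining $x_1y_1$.

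Where you differ is in the homological engine. The paper passes to the Alexander dual using Theorem~\ref{th:dual}, decomposes the cover ideal as
\[
I(G)^\vee = x_1\,I(G'')^\vee + x_{i_1}\cdots x_{i_k}y_1y_{j_1}\cdots y_{j_s}\,I(G')^\vee,
\]
and bounds $\pd(I(G)^\vee)$ via the associated Mayer--Vietoris sequence; this follows Van~Tuyl's template in \cite{VT} and dovetails with the later use of $\pd((\widehat{I})^\vee)$ in Proposition~\ref{prop:red}. You instead stay on the primal side with the standard colon sequence $0\to (R/(I:x_1))(-1)\to R/I\to R/(I,x_1)\to 0$, which is more self-contained: it avoids Terai's theorem and the explicit description of the cover-ideal generators, at the cost of not setting up the $\pd(I^\vee)$ language used in Section~4. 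Both routes land on exactly the same inductive comparison, so the content is essentially the same; yours is slightly more elementary, the paper's is better aligned with its subsequent machinery.
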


\begin{proof} By Lemma \ref{lem:Katz}  and Theorem \ref{th:dual} it is enough to show that $\pd(I(G)^\vee)\leq a(G)$.
We proceed by induction on $n$. If $n=1$, then $G$ is single edge
$x_1y_1$ and $(I(G)^\vee)=(x_1,y_1)$. Therefore
$\pd(I(G)^\vee)=1=a(G)$. Now suppose $n>1$. By Theorem 1.2 we may
assume $\textnormal{deg}(y_1)=1$, $N_G(y_1)=\{x_1\}$, and
$N_G(x_1)=\{x_{i_1}, . . . ,x_{i_k},y_1,y_{j_1}, . . . ,y_{j_s}\}$
with $\{i_1, . . . ,i_k\}\cap \{1,j_1, . . . ,j_s\}=\emptyset$. Note
that there is no minimal vertex cover of $G$ containing  both
$x_1$ and $y_1$ and that any minimal vertex cover of $G$ not
containing $x_1$ must contain $N_G(x_1)$. Set $G'=G\setminus
(\{x_1\}\cup N_G(x_1))$ and $G^{''}=G\setminus (\{y_1\}\cup
N_G(y_1))$. Let $(I(G')^\vee)$ and
$(I(G^{''})^\vee)$ ideals of
$R=K[x_1, . . . ,x_n,y_1, . . . ,y_n]$, then using the same arguments as in \cite[Theorem 3.3]{VT}, we have
\begin{itemize}
\item[{\bf (1)}] $I(G)^\vee=x_1I(G^{''})^\vee+x_{i_1}\cdots x_{i_k}y_1y_{j_1}\cdots y_{j_s}I(G')^\vee.$
\item[{\bf (2)}] $x_1I(G^{''})^\vee\bigcap x_{i_1}\cdots x_{i_k}y_1y_{j_1}\cdots y_{j_s}I(G')^\vee=x_1x_{i_1}\cdots x_{i_k}y_1y_{j_1}\cdots y_{j_s}I(G')^\vee.$
\end{itemize}
The above statements imply that there is an exact sequence
\begin{equation*}\begin{split}
0\longrightarrow x_1x_{i_1}\cdots x_{i_k}&y_1y_{j_1}\cdots y_{j_s}I(G')^\vee \longrightarrow\\
&x_1I(G^{''})^\vee \oplus ~x_{i_1}\cdots x_{i_k}y_1y_{j_1}\cdots
y_{j_s}I(G')^\vee \longrightarrow I(G)^\vee \longrightarrow 0.
\end{split}\end{equation*}
The above exact sequence yields
\begin{equation*}\begin{split}
\pd(I(G)^\vee)\leq\max\{&\pd(x_1x_{i_1}\cdots x_{i_k}y_1y_{j_1}\cdots y_{j_s}I(G')^\vee)+1,\\
&\pd(x_1I(G^{''})^\vee), \pd(x_{i_1}\cdots x_{i_k}y_1y_{j_1}\cdots
y_{j_s}I(G')^\vee)\},
\end{split}\end{equation*}
Note that for any monomial ideal $I$ and monomial $f$ with
property that $\textnormal{supp}(f)\cap \textnormal{supp}(g)=\emptyset$, for all $g\in
\mathcal{G}(I)$ (minimal generating set of $I$), we have
$\pd(fI)=\pd(I)$. Therefore
$$\pd(I(G)^\vee)\leq\max \{\pd(I(G')^\vee)+1,\pd(I(G^{''})^\vee)\}.$$
As explained in the proof of Theorem 3.1, $G'\setminus
\{\text{isolated vertices of }G'\}$ and $G^{''}$ have an even number of
vertices which are not isolated and their heights are half of
the number of vertices. Since isolated vertices do not affect on
$\reg(R/I(G))$ and $a(G)$ for any graph $G$, our induction implies
that $\pd(I(G')^\vee)+1\leq a(G')+1$ and $\pd(I(G^{''})^\vee)\leq
a(G^{''}).$ One can see that $a(G^{''})\leq a(G)$ and
$a(G')+1\leq a(G)$ (adding the edge $x_1y_1$ to any pairwise
3-disjoint set of edges in $G'$ is a set of pairwise 3-disjoint
edges in $G$ ). Therefore $\pd(I(G)^\vee)\leq a(G).$
\end{proof}

\section{Regularity in unmixed case}

Let $\mathfrak{d}$ be a semidirected graph (it has both directed and undirected edges) on [n]. We will write $j\succ i$ if there is a directed path from i to j in $\mathfrak{d}$. By $j\succcurlyeq i$ (and, equivalently, $i\preccurlyeq j$) we mean that $j\succ i$ or $j=i$.  For $A\subseteq [n]$, we say that $j\succcurlyeq A$ if there exists $i\in A$ such that $j\succcurlyeq i$. We say that a set $A\subseteq [n]$ is an \textit{antichain} if for all $i,j\in A$, there is no directed path from $i$ to $j$ in $\mathfrak{d}$, and, by $\mathcal{A}_{\mathfrak{d}}$, denote the set of antichains in $\mathfrak{d}$. We consider $\emptyset$ as an antichain. We say that $\mathfrak{d}$ is \textit{acyclic} if there are no directed cycles in $\mathfrak{d}$, and \textit{transitively closed} if, for all distinct $i,j,k\in [n]$, whenever $ij$ (from $i$ to $j$) and $jk$ (from $j$ to $k$) are directed edges in $\mathfrak{d}$, $ik$ (from $i$ to $k$) is again a directed edge in $\mathfrak{d}$, and, whenever $ij$ is an undirected edge and $kj$ (from $k$ to $j$) is a directed edge, $ik$ is an undirected edge in $\mathfrak{d}$. \\
Let $G$ be a graph with $2n$ vertices which are not isolated with $\heit(I(G))=n$ and suppose $G$ satisfies $(\ast)$. We associate to $G$ a semidirected graph $\mathfrak{d}_G$ on [n] defined as follows: for $i\neq j\in [n]$, $ij$ is a directed edge of $\mathfrak{d}_G$ from $i$ to $j$ if
and only if $x_iy_j$ is an edge of $G$, and, $ij$ is an undirected edge of $\mathfrak{d}_G$ if and only if  $x_ix_j$ is an edge of $G$. Notice that if $G$ is unmixed (in particular, $G$ is very well-covered), $\mathfrak{d}_G$ is simple, i.e., without loops and multiple edges.

In the next lemma, $\textnormal{Unm}(R/I)$ denotes the set of associated prime ideals $\mathfrak{p}$ of $I$ such that $\heit(\mathfrak{p})=\heit(I)$.

\begin{lem} \label{lem:prime} Let $G$ be a graph with $2n$ vertices which are not isolated, $\heit(I(G))=n$, and suppose $G$ satisfies $(\ast)$.  For all $\mathfrak{p}\in \textnormal{Unm}(R/I)$, if $y_i\in \mathfrak{p}$ and $j\succcurlyeq i$, then $y_j\in \mathfrak{p}$, $i, j \in [n]$.
\end{lem}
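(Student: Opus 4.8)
The plan is to convert this statement about the associated prime $\mathfrak{p}$ and the order $\succcurlyeq$ on $\mathfrak{d}_G$ into a purely combinatorial assertion about a single minimal vertex cover, and then to lean entirely on the perfect matching built into condition $(\ast)$. Since $I(G)$ is a squarefree monomial ideal it is radical, so every associated prime is a minimal prime and corresponds to a minimal vertex cover of $G$. Hence I would write $\mathfrak{p}=(v : v\in C)$ for a minimal vertex cover $C$, so that $y_i\in\mathfrak{p}$ means exactly $y_i\in C$; moreover $\mathfrak{p}\in\textnormal{Unm}(R/I)$ says $\heit(\mathfrak{p})=\heit(I(G))=n$, that is $|C|=n$.

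The key structural fact I would record next is that the perfect matching $\{x_1y_1,\ldots,x_ny_n\}\subseteq E(G)$ provided by $(\ast)$ forces $C$ to contain \emph{exactly one} of $x_\ell,y_\ell$ for each $\ell\in[n]$. Indeed, these $n$ edges are pairwise disjoint, so covering each of them requires at least one vertex per edge, giving $|C|\geq n$; the equality $|C|=n$ then pins down precisely one endpoint of each matching edge. In particular $y_\ell\in C$ if and only if $x_\ell\notin C$.

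With that in hand, I would unwind the hypothesis $j\succcurlyeq i$: either $i=j$, in which case there is nothing to prove, or there is a directed path $i=a_0\to a_1\to\cdots\to a_m=j$ in $\mathfrak{d}_G$, where by the definition of $\mathfrak{d}_G$ each directed edge $a\to b$ encodes an edge $x_ay_b\in E(G)$. I would then argue by induction on the length of this path, so that it suffices to establish the single-step claim: if $y_a\in C$ and $x_ay_b\in E(G)$, then $y_b\in C$. This claim is immediate from the structural fact, since $y_a\in C$ gives $x_a\notin C$, while the edge $x_ay_b$ must be covered by $C$, forcing $y_b\in C$. Chaining the claim along the path yields $y_j\in C$, i.e. $y_j\in\mathfrak{p}$, which is exactly the assertion.

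I do not expect a genuine obstacle here: the entire content lies in recognizing that the height condition $\heit(\mathfrak{p})=n$ together with the perfect matching pins $C$ down to exactly one of $x_\ell,y_\ell$ per index, after which covering the single edge $x_ay_b$ closes the argument. The only points demanding care are reading off the direction convention for $\mathfrak{d}_G$ correctly and reducing the path condition to the one-edge statement before inducting.
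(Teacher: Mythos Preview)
Your proof is correct and mirrors the paper's own argument: both reduce by induction on path length to the case of a single directed edge, use the height condition together with the matching edges $x_\ell y_\ell$ to conclude that exactly one of $x_\ell,y_\ell$ lies in $\mathfrak{p}$, and then finish by noting that $x_i\notin\mathfrak{p}$ and $x_iy_j\in I\subseteq\mathfrak{p}$ force $y_j\in\mathfrak{p}$. The only cosmetic difference is that you translate into the language of minimal vertex covers, whereas the paper stays with the prime ideal throughout.
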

\begin{proof} We proceed by induction on the length of the directed path from $i$ to $j$. Without loss of generality, we may assume $ij$ is a directed edge in $\mathfrak{d}_G$. Now let $k\in[n]$. Since $x_ky_k\in I\subseteq \mathfrak{p}$, $x_k\in \mathfrak{p}$ or $y_k\in \mathfrak{p}$, But since $\heit(\mathfrak{p})=n$ we have that $x_k\in \mathfrak{p}$ if and only if $y_k\notin \mathfrak{p}$. Now $y_i\in \mathfrak{p}$ implies that $x_i\notin \mathfrak{p}$ which together with $x_iy_j\in\mathfrak{p}$ shows that $y_j\in\mathfrak{p}$.
\end{proof}

Let $\mathfrak{d}$ be a semidirected graph. We say that a pair $i,j$ of vertices $\mathfrak{d}$ are
\textit{strongly connected} if there are directed paths from $i$ to $j$ and from $j$ to $i$. A \textit{strong component} of $\mathfrak{d}$ is an induced subgraph maximal under the property that every pair of vertices in it is strongly connected. Strong components of $\mathfrak{d}$ form a partition of its vertex set. \\

We state the following definition.
\begin{defn}\label {def:new1}
Let $G$ be a graph with $2n$ vertices which are not isolated with $\heit(I(G))=n$ and suppose $G$ satisfies $(\ast)$. Let $\mathcal{Z}_1, \ldots,\mathcal{Z}_t$ be the vertex sets of the strong components of
$\mathfrak{d}_G$. We define a semidirected graph $\mathfrak{\widehat{d}}_G$ on $[t]$ by setting, for $a\neq b\in [t]$,

$\bullet$ $ab$ to be a directed edge (from $a$ to $b$) if there exists a directed path in $\mathfrak{d}_G$ from any  of the vertices in $\mathcal{Z}_a$ to any of the vertices in $\mathcal{Z}_b$;

$\bullet$ $ab$ to be an undirected edge if there exists an undirected edge in $\mathfrak{d}_G$ from any  of the vertices in $\mathcal{Z}_a$ to any of the vertices in $\mathcal{Z}_b$.
\end{defn}

It is known that $\mathfrak{\widehat{d}}_G$ has no directed cycles. Moreover, if $G$ is unmixed, then $\mathfrak{d}_G$ and therefore $\mathfrak{\widehat{d}}_G$ are transitively closed.

We will use the same notation as in Definition \ref{def:new1} for the induced order, i.e., say that $b\succ a$ if there is a directed edge from $a$ to $b$.

Hence we give the following.
\begin{defn}\label{def:acyc}
Let $G$ be a graph with $2n$ vertices which are not isolated with $\heit(I(G))=n$ and suppose $G$ satisfies $(\ast)$. Let $\mathcal{Z}_1, \ldots,\mathcal{Z}_t$ be the vertex sets of the strong components of
$\mathfrak{d}_G$. We define acyclic reduction of $G$ the graph $\widehat{G}$ on new vertices $\{u_1, \ldots, u_t\}\cup\{v_1,\ldots , v_t\}$, with edges

$\bullet$ $u_av_a$, for all $1\leq a\leq t$;

$\bullet$ $u_av_b$, for all directed edges $ab\in \mathfrak{\widehat{d}}_G$;

$\bullet$ $u_au_b$, for all undirected edges $ab\in \mathfrak{\widehat{d}}_G$.

\end{defn}

Let $G$ be a graph with $2n$ vertices which are not isolated with $\heit(I(G))=n$ and suppose $G$ satisfies $(\ast)$. For any antichain $A$ of $\mathfrak{\widehat{d}}_G$ we define
$$\Omega_A=\{j\in \mathcal{Z}_b ~|~ b\succcurlyeq A\}.$$
Since, for any antichain $A=\{i_1, \ldots,i_r\}$ of $\mathfrak{d}_G$, there exists a unique antichain $A'=\{a_1, \ldots ,a_r\}$ in $\mathfrak{\widehat{d}}_G$ such that $i_j\in \mathcal{Z}_{a_j}$ for all $1\leq j \leq r$, we set
$$\Omega_A=\Omega_{A'}.$$

\begin{lem} \label{lem:un} Let $G$ be a graph with $2n$ vertices which are not isolated, $\heit(I(G))=n$, and suppose $G$ satisfies $(\ast)$. Then
$$a(G)\geq \max\{|A|~|~ A\in\mathcal{A}_{\mathfrak{d}_G}~,~ \Omega_A\nsupseteq e ~for ~all~undirected~ edges ~e~ in ~ \mathfrak{d}_G\}.$$
\end{lem}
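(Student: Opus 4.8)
The plan is to prove this inequality directly by exhibiting, for each antichain $A$ attaining the maximum on the right-hand side, an explicit pairwise 3-disjoint set of edges of the same cardinality. Write $A=\{i_1,\ldots,i_r\}$ and consider the ``diagonal'' set $\Gamma=\{x_{i_1}y_{i_1},\ldots,x_{i_r}y_{i_r}\}$. These are genuine edges of $G$ by $(\ast)$, and they are pairwise distinct, so $|\Gamma|=r=|A|$. If I can show that $\Gamma$ is pairwise 3-disjoint, then $a(G)\geq|A|$ by the very definition of $a(G)$, and taking the maximum over all admissible antichains $A$ yields the lemma.

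The heart of the argument is to verify 3-disjointness. Fix distinct $i,j\in A$; I must check that the induced subgraph of $G$ on $\{x_i,y_i,x_j,y_j\}$ consists of exactly the two disjoint edges $x_iy_i$ and $x_jy_j$, i.e. that the four potential ``extra'' edges do not occur. First, $y_iy_j\notin E(G)$ because $Y$ is an independent set by $(\ast)$. Next, neither $x_iy_j$ nor $x_jy_i$ can be an edge: an edge $x_iy_j$ would be a directed edge $i\to j$ of $\mathfrak{d}_G$, hence a directed path from $i$ to $j$, contradicting that $A$ is an antichain, and symmetrically for $x_jy_i$. The remaining edge to exclude is $x_ix_j$, which corresponds to an undirected edge $\{i,j\}$ of $\mathfrak{d}_G$; this is precisely where the hypothesis on $\Omega_A$ must enter.

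The step I expect to be the crux is reducing the $\Omega_A$-hypothesis to a statement about $A$ itself, namely that $A$ spans no undirected edge. The key observation is the inclusion $A\subseteq\Omega_A$: each $i\in A$ lies in some strong component $\mathcal{Z}_a$ with $a$ in the associated antichain $A'$ of $\mathfrak{\widehat{d}}_G$, and since $a\succcurlyeq a$ by reflexivity of $\succcurlyeq$ we have $a\succcurlyeq A'$, whence $i\in\Omega_{A'}=\Omega_A$. Consequently, if $\{i,j\}$ were an undirected edge with $i,j\in A$, then this undirected edge $e$ would satisfy $e\subseteq A\subseteq\Omega_A$, contradicting the hypothesis that $\Omega_A\nsupseteq e$ for every undirected edge $e$ of $\mathfrak{d}_G$. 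Hence $x_ix_j\notin E(G)$, which completes the verification that $\Gamma$ is pairwise 3-disjoint. I note that this direction of the inequality uses only the weak consequence that $A$ itself carries no undirected edge; the full strength of the $\Omega_A$ condition is presumably reserved for the reverse inequality that would pin down $a(G)$, and hence $\reg(R/I(G))$, exactly.
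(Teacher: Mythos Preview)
Your proof is correct and follows essentially the same approach as the paper's own proof: both take the set $\{x_iy_i : i\in A\}$ and verify it is pairwise 3-disjoint by using the antichain property to exclude the edges $x_iy_j$, $x_jy_i$ and the inclusion $A\subseteq\Omega_A$ together with the $\Omega_A$-hypothesis to exclude $x_ix_j$. Your write-up is in fact slightly more detailed (you explicitly note $y_iy_j\notin E(G)$ and spell out why $A\subseteq\Omega_A$), but the argument is the same.
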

\begin{proof} Let $A=\{i_1, \ldots ,i_r\}\in \mathcal{A}_{\mathfrak{d}_G}$ be such that $\Omega_A\nsupseteq e$ for all undirected edges $e$ in $\mathfrak{d}_G$.
Since  $A$ is an antichain, hence $\mathfrak{d}_G$ has no directed edges $i_li_s$ and $i_si_l$, and since $\Omega_A\nsupseteq e$ for all undirected edges $e$ in $\mathfrak{d}_G$, therefore $\mathfrak{d}_G$ does not contain the undirected edge $i_li_s$ for all $i_l\neq i_s\in A$. Finally, since $\Omega_A\supseteq A$, then $A\nsupseteq e$ and therefore $\{x_iy_i~|~ i\in A\}$ is a set of pairwise 3-disjoint edges in $G$.
\end{proof}
\begin{lem} \label{lem:well} Let $G$ be a very well-covered graph. Then $\widehat{G}$ is a Cohen-Macaulay very well-covered graph.
\end{lem}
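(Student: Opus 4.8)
The plan is to verify that $\widehat{G}$ satisfies the combinatorial characterization of Cohen--Macaulay very well-covered graphs recorded in Remark \ref{rem:equiv}. Since $\widehat{G}$ has vertex set $U\cup V$ with $U=\{u_1,\ldots,u_t\}$ and $V=\{v_1,\ldots,v_t\}$, and the edges $u_av_a$ ($1\le a\le t$) form a perfect matching, I would first check $(\ast)$: the set $V$ is independent (no edge of $\widehat{G}$ joins two $v$'s, by construction), every edge meets $U$, and deleting any $u_a$ leaves $u_av_a$ uncovered, so $U$ is a minimal vertex cover; moreover the matching $\{u_av_a\}$ forces every vertex cover to have at least $t$ elements, whence $\heit(I(\widehat{G}))=t$ and no vertex is isolated. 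Thus $\widehat{G}$ is a graph on $2t$ non-isolated vertices of height $t$ satisfying $(\ast)$ with $X=U$, $Y=V$.

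Next I would install $(\ast\ast)$. Because $\widehat{\mathfrak{d}}_G$ has no directed cycles (as noted after Definition \ref{def:new1}), its vertex set $[t]$ admits a topological order; after relabelling the strong components along this order, every directed edge $ab$ of $\widehat{\mathfrak{d}}_G$ has $a<b$. Since the edges $u_av_b$ with $a\neq b$ are exactly those arising from directed edges $ab$, this yields $u_av_b\in E(\widehat{G})\Rightarrow a\le b$, i.e.\ $(\ast\ast)$. For condition (i) of Proposition \ref{pro:well}, I would use that $\widehat{\mathfrak{d}}_G$ is transitively closed (again noted after Definition \ref{def:new1}, since $G$ is unmixed). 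Translating the hypothesis $z_au_b,\,v_bu_c\in E(\widehat{G})$ into $\widehat{\mathfrak{d}}_G$: the edge $v_bu_c$ is the directed edge $cb$; if $z_a=v_a$, then $v_au_b$ is the directed edge $ba$, and transitivity of directed edges gives the directed edge $ca$, i.e.\ $v_au_c\in E(\widehat{G})$; if $z_a=u_a$, then $u_au_b$ is an undirected edge, and the mixed transitivity rule (undirected $ab$ together with directed $cb$ yields undirected $ac$) gives $u_au_c\in E(\widehat{G})$. In both cases condition (i) holds.

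The main obstacle is condition (ii), which asserts that between two components there cannot be both a directed and an undirected connection, i.e.\ $u_av_b\in E(\widehat{G})\Rightarrow u_au_b\notin E(\widehat{G})$. Here I would descend to $\mathfrak{d}_G$ itself, which is transitively closed because $G$ is unmixed. Suppose, for contradiction, that $ab$ is simultaneously directed and undirected in $\widehat{\mathfrak{d}}_G$. The directed edge provides $p\in\mathcal{Z}_a$, $q\in\mathcal{Z}_b$ with a directed path $p\to q$ in $\mathfrak{d}_G$, and the undirected edge provides $p'\in\mathcal{Z}_a$, $q'\in\mathcal{Z}_b$ with $x_{p'}x_{q'}\in E(G)$. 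Since $p,p'$ lie in the same strong component there is a directed path $p'\to p$, and likewise $q\to q'$; concatenating $p'\to p\to q\to q'$ and applying transitive closure of $\mathfrak{d}_G$ (with $p'\neq q'$, as $a\neq b$) yields the directed edge $p'q'$, that is $x_{p'}y_{q'}\in E(G)$. But then $x_{p'}y_{q'}\in E(G)$ and $x_{p'}x_{q'}\in E(G)$ violate condition (ii) of Proposition \ref{pro:well} for $G$, a contradiction. Hence condition (ii) holds for $\widehat{G}$.

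Having verified $(\ast)$, $(\ast\ast)$, and conditions (i) and (ii) of Proposition \ref{pro:well} for $\widehat{G}$ on $2t$ non-isolated vertices of height $t$, Remark \ref{rem:equiv} gives at once that $\widehat{G}$ is Cohen--Macaulay and very well-covered. I expect the delicate point to be the collapsing argument for (ii): one must be careful that the path running through the two strong components really closes up to a single directed edge under transitive closure, and that the relevant indices remain distinct so that both the transitivity rules and the forbidden configuration in (ii) genuinely apply.
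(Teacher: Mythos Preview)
Your proof is correct and follows essentially the same route as the paper: verify for $\widehat{G}$ the combinatorial criteria $(\ast)$, $(\ast\ast)$, and conditions (i), (ii) of Proposition~\ref{pro:well}, invoking the acyclicity and transitive closure of $\widehat{\mathfrak{d}}_G$ together with condition (ii) for $G$. Your treatment is in fact more explicit than the paper's in checking $(\ast)$ and spelling out the two cases for (i); for (ii) you and the paper run the same descent argument to $\mathfrak{d}_G$, producing a pair of vertices joined by both a directed and an undirected edge, contradicting condition (ii) for $G$ (equivalently, the simplicity of $\mathfrak{d}_G$).
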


\begin{proof} By definition $\widehat{G}$ has an even number of vertices which are not isolated and with $2\heit(I(\widehat{G}))=|V(\widehat{G})|$. By Remark \ref{rem:equiv}, it is enough to show that $\widehat{G}$ satisfies $(\ast), (\ast \ast)$, (i), and (ii). Clearly $\widehat{G}$ satisfies $(\ast)$. Since $\mathfrak{\widehat{d}}_G$ is acyclic, its vertex set can be relabeled such that every directed edge of $\mathfrak{\widehat{d}}_G$ is of the form $ij$ with $i<j$. This shows that $\widehat{G}$ satisfies the condition $(\ast\ast)$. Since $G$ is unmixed, $\mathfrak{d}_G$ has no multiple edge and is transitively closed. Therefore $\mathfrak{\widehat{d}}_G$ has no multiple edge. In fact, if there are both undirected and directed edges from $a$ to $b$ in $\mathfrak{\widehat{d}}_G$, then there exist $i_1,i_2\in\mathcal{Z}_a$ and  $j_1,j_2\in\mathcal{Z}_b$ such that $i_1j_1$ is an undirected and $i_2j_2$ is a directed edge in $\mathfrak{d}_G$. Since $\mathfrak{d}_G$ is transitively closed, the directed edge $i_1j_1$ must belong to $\mathfrak{d}_G$, contradicts the fact that $\mathfrak{d}_G$ has no multiple edge. Therefore $\widehat{G}$ satsfies the condition (ii). Finally $\widehat{G}$ satisfies (i) since $\mathfrak{\widehat{d}}_G$ is transitively closed and $G$ satisfies (ii).
\end{proof}

\begin{rem} \em Let $G$ be a very well-covered graph. It is easy  to see that $\mathfrak{\widehat{d}}_G=\mathfrak{d}_{\widehat{G}}$. Moreover, if $G$ is itself Cohen-Macaulay, then $G=\widehat{G}$.
\end{rem}

\begin{lem} \label{lem:ass} Let $G$ be a very well-covered graph with $2n$ vertices. Then
\begin{equation*}\begin{split}
\textnormal{Ass}(R/I)=\{(x_i~|~i\notin\Omega_A)+(y_i~|~i\in\Omega_A)~ ~|~ ~ &A\in\mathcal{A}_{\mathfrak{\widehat{d}}_G}~,~ \Omega_A\nsupseteq e \\ &for ~all~undirected~ edges ~e~ in ~ \mathfrak{d}_G \}.
\end{split}\end{equation*}
\end{lem}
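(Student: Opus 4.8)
The plan is to identify $\textnormal{Ass}(R/I)$ combinatorially and then match the resulting index sets with the $\Omega_A$'s. Since $I=I(G)$ is a squarefree monomial ideal, $R/I$ is reduced, so $\textnormal{Ass}(R/I)=\textnormal{Min}(R/I)$ and these primes are exactly the monomial primes generated by the minimal vertex covers of $G$. Because $G$ is very well-covered we have $\heit(I)=n$ and, by $(\ast)$, $x_iy_i\in E(G)$ for every $i$; hence every vertex cover meets $\{x_i,y_i\}$ for each $i$, and an unmixed minimal cover of height $n$ meets each such pair in exactly one vertex. Thus every $\mathfrak{p}\in\textnormal{Ass}(R/I)$ has the form $\mathfrak{p}=(x_i\mid i\notin T)+(y_i\mid i\in T)$ for the uniquely determined set $T=\{i\in[n]\mid y_i\in\mathfrak{p}\}\subseteq[n]$. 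So the whole statement reduces to characterizing which subsets $T$ arise, and then showing these are exactly the sets $\Omega_A$ with $A$ an antichain of $\mathfrak{\widehat{d}}_G$ subject to $\Omega_A\nsupseteq e$.

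First I would translate the vertex-cover condition on $\mathfrak{p}$ into two conditions on $T$. Covering the undirected edges: if $ij$ is an undirected edge of $\mathfrak{d}_G$ (i.e.\ $x_ix_j\in E(G)$) then $x_ix_j\in I\subseteq\mathfrak{p}$ forces $x_i\in\mathfrak{p}$ or $x_j\in\mathfrak{p}$, that is $\{i,j\}\nsubseteq T$; so $T$ contains no undirected edge, which is exactly the condition $\Omega_A=T\nsupseteq e$ for every undirected edge $e$. Covering the directed edges: if $ij$ is a directed edge (i.e.\ $x_iy_j\in E(G)$) and $i\in T$, then $x_i\notin\mathfrak{p}$ forces $y_j\in\mathfrak{p}$, i.e.\ $j\in T$; iterating along directed paths shows $T$ is upward closed under $\succcurlyeq$ in $\mathfrak{d}_G$, which is precisely the content of Lemma~\ref{lem:prime}. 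Conversely, any $T$ that is upward closed and contains no undirected edge yields a height-$n$ monomial prime containing $I$ once one checks all three edge types, and such a prime is minimal over $I$ because $\heit(I)=n$. This shows $\textnormal{Ass}(R/I)$ is parametrized by the $\succcurlyeq$-upward-closed subsets $T\subseteq[n]$ containing no undirected edge of $\mathfrak{d}_G$.

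It remains to match upward-closed sets with the $\Omega_A$. The key observation is that every $\succcurlyeq$-upward-closed $T$ is automatically a union of strong components of $\mathfrak{d}_G$: if $i\in T$ and $j$ is strongly connected to $i$ then $j\succcurlyeq i$, so $j\in T$. Passing to the condensation $\mathfrak{\widehat{d}}_G$, which is acyclic and, since $G$ is unmixed, transitively closed and hence a poset (as recorded in Definition~\ref{def:new1} and used in Lemma~\ref{lem:well}), the upward-closed unions of strong components correspond bijectively to the up-sets of $\mathfrak{\widehat{d}}_G$; and in a finite poset every up-set equals the set of elements $\succcurlyeq$ its uniquely determined antichain of minimal elements. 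Under this correspondence the up-set determined by an antichain $A\in\mathcal{A}_{\mathfrak{\widehat{d}}_G}$ is exactly $\Omega_A=\{j\in\mathcal{Z}_b\mid b\succcurlyeq A\}$, while conversely each $\Omega_A$ is upward closed. Intersecting with the undirected-edge constraint, which transfers verbatim since $T=\Omega_A$, yields the claimed description.

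The main obstacle I expect is the bookkeeping in the second step: proving the converse direction, where for every admissible $T$ one must verify that $\mathfrak{p}$ covers all three edge types and that minimality follows from $\heit(I)=n$, together with the clean forward extraction of $\succcurlyeq$-closure from Lemma~\ref{lem:prime}. The poset/antichain matching of the third step is conceptually the heart of the argument, but it is a standard filter-versus-antichain correspondence once one knows upward-closed sets are unions of strong components; the only care needed there is confirming that $\mathfrak{\widehat{d}}_G$ is genuinely a poset, which rests on the acyclicity and transitivity already established.
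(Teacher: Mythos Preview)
Your proposal is correct and follows essentially the same approach as the paper: both arguments identify each $\mathfrak{p}\in\textnormal{Ass}(R/I)$ with the set $T=\{i\mid y_i\in\mathfrak{p}\}$, use Lemma~\ref{lem:prime} together with the undirected-edge constraint to show $T$ is $\succcurlyeq$-upward-closed and contains no undirected edge, and then recover the antichain $A\in\mathcal{A}_{\mathfrak{\widehat{d}}_G}$ as the set of minimal strong components meeting $T$ (conversely checking that each such $\Omega_A$ gives a height-$n$ prime containing $I$). Your presentation separates the up-set/antichain correspondence out as a clean third step, whereas the paper interleaves it with the forward inclusion, but the content is the same.
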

\begin{proof} Let $\mathfrak{p}\in \textnormal{Ass}(R/I)$. Since $I$ is unmixed, just one of $x_i$ and $y_i$ belongs to $\mathfrak{p}$ for all $i=1, \ldots, \heit(I)$. Let $U=\{b~|~y_j\in\mathfrak{p}~\text{for some } j\in\mathcal{Z}_b\}$. Since $\mathcal{Z}_1, \ldots,\mathcal{Z}_t$ are the vertex sets of the strong components of $\mathfrak{d}_G$, from Lemma \ref{lem:prime} it follows that $y_j\in\mathfrak{p}$ for all $j\in\cup_{b\in U}\mathcal{Z}_b$, and that if $b'\succ b$ and $b\in U$, then $b'\in U$. Suppose $A$ is the set of minimal elements of $U$ under $\succ$. One can see that $A$ is an antichain in $\mathfrak{\widehat{d}}_G$, $U=\{b~|~b\succcurlyeq A\}$, and $\Omega_A=\cup_{b\in U}\mathcal{Z}_b=\{j~|~y_j\in\mathfrak{p}\}$. Now we show that $\Omega_A$ does not contain any undirected edge of $\mathfrak{d}_G$. Suppose the contrary that $e=\{i,j\}\subseteq\Omega_A$ is an undirected edge in $\mathfrak{d}_G$. So that $x_ix_j\in I\subseteq\mathfrak{p}$ and hence we may assume $x_i\in \mathfrak{p}$. Therefore $y_i\notin \mathfrak{p}$. On the other hand, since $i\in\Omega_A$, we get that $y_i\in\mathfrak{p}$ a contradiction. Hence $\textnormal{Ass}(R/I)\subseteq\{(x_i~|~i\notin\Omega_A)+(y_i~|~i\in\Omega_A)~ ~|~ ~ A\in\mathcal{A}_{\mathfrak{\widehat{d}}_G}~,~ \Omega_A\nsupseteq e ~\text{for all undirected edges }e~ \text{of} ~ \mathfrak{d}_G \}$.

Conversely, let $A\in\mathcal{A}_{\mathfrak{\widehat{d}}_G}$ be such that $\Omega_A$ does not contain any undirected edge of $\mathfrak{d}_G$ and let $\mathfrak{p}=(x_i~|~i\notin\Omega_A)+(y_i~|~i\in\Omega_A)$. Therefore $\heit(\mathfrak{p})=\heit(I)$. Since $I$ is unmixed, it suffices to prove that $I\subseteq \mathfrak{p}$. $I$ is generated by monomials of the forms $x_iy_i$ ($i=1, \ldots ,n$), $x_iy_j$, and $x_ix_j$ for some $1\leq i\neq j\leq n$. It is clear that $x_iy_i\in\mathfrak{p}$ for all $i=1, \ldots ,n$. So assume $i\neq j$. First let $x_iy_j\in I$. If $i\notin \Omega_A$, there is nothing to prove. If $i\in\Omega_A$, then there exists $a,b,b'$ such that $a\in A$, $b\succcurlyeq a$, $i\in \mathcal{Z}_b$, and $j\in\mathcal{Z}_{b'}$. Since $ij$ is an directed edge in $\mathfrak{d}_G$, we get that $b'\succ b$ in $\mathfrak{\widehat{d}}_G$. Hence $b'\succ a$, and $j\in\Omega_A$ which shows that $y_j\in\mathfrak{p}$ and so $x_iy_j\in\mathfrak{p}$. Now let $x_ix_j\in I$. Since $\Omega_A$ does not contain any undirected edge of $\mathfrak{d}_G$, we have $\{i,j\}\nsubseteq \Omega_A$. Therefore $i\notin\Omega_A$ or $j\notin\Omega_A$ which shows that $x_i\in \mathfrak{p}$ or $x_j\in \mathfrak{p}$. Hence $x_ix_j\in\mathfrak{p}$.
\end{proof}

With the same notations used in Definion \ref{def:acyc}, let $\widehat{G}$ be the acyclic reduction of $G$ with edge ideal $\widehat{I}=I(\widehat{G})$ as an ideal of $S=k[u_1,\ldots , u_t,v_1, \ldots , v_t]$.
Thanks to Lemma \ref{lem:ass}, the next result is a generalization of Remark 3.3 in \cite{Ku}. Since the proof follows by similar arguments as in \cite[Remark 3.3]{Ku}, we omit it.

\begin{prop}\label{prop:red}Let $G$ be a very well-covered graph. Let $\widehat{G}$ be the acyclic reduction of $G$ with edge ideal $\widehat{I}\subseteq S$. Then $\reg(R/I(G))=\pd((\widehat{I})^\vee)=\reg(S/\widehat{I})$.
\end{prop}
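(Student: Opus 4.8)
The plan is to peel off the two outer equalities using Terai's theorem and to reduce the heart of the statement to a comparison of the two cover ideals. The rightmost equality is immediate: applying Theorem \ref{th:dual} to the squarefree monomial ideal $\widehat{I}\subseteq S$ gives $\reg(S/\widehat{I})=\pd((\widehat{I})^\vee)$. Applying the same theorem to $I(G)\subseteq R$ gives $\reg(R/I(G))=\pd(I(G)^\vee)$. Hence the whole proposition follows once I show $\pd(I(G)^\vee)=\pd((\widehat{I})^\vee)$, and I would prove this by exhibiting $I(G)^\vee$ as a disjoint-support monomial substitution of $(\widehat{I})^\vee$.

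To set up the substitution I would use Lemma \ref{lem:ass}. Since $G$ is very well-covered it is unmixed, so the minimal generators $x_F$ of the cover ideal $I(G)^\vee$ are indexed by the minimal vertex covers $F$, equivalently by $\textnormal{Ass}(R/I(G))$; by Lemma \ref{lem:ass} these correspond to the antichains $A\in\mathcal A_{\mathfrak{\widehat{d}}_G}$ with $\Omega_A\nsupseteq e$ for every undirected edge $e$ of $\mathfrak d_G$, the generator attached to $A$ being $\prod_{i\notin\Omega_A}x_i\prod_{i\in\Omega_A}y_i$, where $\Omega_A=\bigcup_{b\succcurlyeq A}\mathcal Z_b$. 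By Lemma \ref{lem:well} the acyclic reduction $\widehat{G}$ is again very well-covered, with $\mathfrak d_{\widehat{G}}=\mathfrak{\widehat{d}}_G$ acyclic, so its strong components are singletons; applying Lemma \ref{lem:ass} to $\widehat{G}$ shows that the minimal generators of $(\widehat{I})^\vee$ are indexed by the \emph{same} antichains $A$, the generator attached to $A$ being $\prod_{b\not\succcurlyeq A}u_b\prod_{b\succcurlyeq A}v_b$. Consequently the monomial substitution $u_b\mapsto\prod_{i\in\mathcal Z_b}x_i$, $v_b\mapsto\prod_{i\in\mathcal Z_b}y_i$ sends each generator of $(\widehat{I})^\vee$ exactly to the corresponding generator of $I(G)^\vee$, since $\prod_{b\not\succcurlyeq A}\prod_{i\in\mathcal Z_b}x_i\cdot\prod_{b\succcurlyeq A}\prod_{i\in\mathcal Z_b}y_i=\prod_{i\notin\Omega_A}x_i\prod_{i\in\Omega_A}y_i$.

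Because the images $\prod_{i\in\mathcal Z_b}x_i$ and $\prod_{i\in\mathcal Z_b}y_i$ have pairwise disjoint supports (distinct strong components are disjoint, and the $x$- and $y$-variables are disjoint), this substitution preserves divisibility among squarefree monomials and commutes with least common multiples. I would conclude that the lcm-lattices of $(\widehat{I})^\vee$ and of $I(G)^\vee$ are isomorphic as posets, and hence, by the theorem of Gasharov, Peeva and Welker describing the resolution of a monomial ideal through its lcm-lattice, the total Betti numbers in each homological degree coincide; in particular $\pd(I(G)^\vee)=\pd((\widehat{I})^\vee)$. Combined with the two applications of Theorem \ref{th:dual} this yields the asserted chain $\reg(R/I(G))=\pd((\widehat{I})^\vee)=\reg(S/\widehat{I})$.

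I expect the main obstacle to be the bookkeeping that makes the generator correspondence an honest disjoint-support substitution, and specifically the verification that the two indexing conditions agree: that $\Omega_A$ contains an undirected edge of $\mathfrak d_G$ if and only if the antichain $A$ fails the analogous condition for $\widehat{G}$, i.e. $\{b:b\succcurlyeq A\}$ contains an undirected edge of $\mathfrak{\widehat{d}}_G$. The delicate point in the forward direction is to rule out an undirected edge of $\mathfrak d_G$ lying inside a single strong component; this follows from condition (ii) of Proposition \ref{pro:well} together with transitive closure, since strongly connected $i,j$ force $x_iy_j\in E(G)$ and hence $x_ix_j\notin E(G)$, so every undirected edge joins two distinct blocks and descends to an undirected edge of $\mathfrak{\widehat{d}}_G$ by Definition \ref{def:new1}; the reverse direction is again Definition \ref{def:new1}. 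Once this equivalence is established the antichain indexing is literally the same for $G$ and $\widehat{G}$, and the substitution argument goes through.
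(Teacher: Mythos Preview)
Your proposal is correct and follows essentially the approach the paper intends. The paper omits the proof entirely, stating only that it follows from arguments analogous to \cite[Remark~3.3]{Ku} once Lemma~\ref{lem:ass} is available; your argument is precisely such an analogue: you use Lemma~\ref{lem:ass} to parametrize the minimal generators of both cover ideals by the same set of antichains in $\mathfrak{\widehat d}_G$, realize $I(G)^\vee$ as a disjoint-support monomial substitution of $(\widehat I)^\vee$, and conclude equality of projective dimensions, with Theorem~\ref{th:dual} handling the two outer equalities. The only mild difference is that you invoke the Gasharov--Peeva--Welker lcm-lattice theorem to pass from the substitution to equality of Betti numbers, whereas Kummini's original argument phrases the same step slightly differently; either formulation is valid, and your identification of the one nontrivial bookkeeping point---that an undirected edge of $\mathfrak d_G$ contained in $\Omega_A$ must join two distinct strong components, by condition~(ii) of Proposition~\ref{pro:well}---is exactly the place where the unmixedness of $G$ enters.
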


\begin{lem} \label{lem:acy}Let $G$ be a very well-covered graph. Also assume $\widehat{G}$ be the acyclic reduction of $G$. Then
\begin{equation*}\begin{split}
&\max\{|A|~|~ A\in\mathcal{A}_{\mathfrak{d}_G}~,~ \Omega_A\nsupseteq e ~for ~all~undirected~ edges ~e~ in ~ \mathfrak{d}_G\}=\\
&=\max\{|A|~|~ A\in\mathcal{A}_{\mathfrak{d}_{\widehat{G}}}~,~ \Omega_A\nsupseteq e ~for ~all~undirected~ edges ~e~ in ~ \mathfrak{d}_G\}.
\end{split}\end{equation*}
\end{lem}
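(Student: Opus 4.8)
The plan is to exhibit a size- and $\Omega$-preserving correspondence between the antichains of $\mathfrak{d}_G$ and those of $\mathfrak{\widehat{d}}_G = \mathfrak{d}_{\widehat{G}}$, and then to observe that the side condition ``$\Omega_A\nsupseteq e$ for all undirected edges $e$ in $\mathfrak{d}_G$'' depends only on the set $\Omega_A$ and on the (fixed) undirected edges of $\mathfrak{d}_G$. Once such a correspondence is available, both maxima range over exactly the same collection of data $(|A|,\Omega_A)$, so they must coincide. Concretely I would establish the two inequalities separately and denote the left-hand and right-hand maxima by $L$ and $R$.

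For $L\leq R$: given an antichain $A=\{i_1,\ldots ,i_r\}$ of $\mathfrak{d}_G$ satisfying the constraint, I would pass to the associated antichain $A'=\{a_1,\ldots ,a_r\}$ of $\mathfrak{\widehat{d}}_G$ determined by $i_j\in\mathcal{Z}_{a_j}$. No two of the $i_j$ can lie in the same strong component, since otherwise there would be directed paths in both directions between them, contradicting that $A$ is an antichain; hence the $a_j$ are distinct and $|A'|=|A|$. Moreover $\Omega_{A'}=\Omega_A$ by the very definition of $\Omega$ on antichains of $\mathfrak{d}_G$, so $A'$ satisfies the same constraint and witnesses $|A|\leq R$.

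For $R\leq L$: given an antichain $A'=\{a_1,\ldots ,a_r\}$ of $\mathfrak{\widehat{d}}_G$ satisfying the constraint, I would choose a representative $i_j\in\mathcal{Z}_{a_j}$ for each $j$ and set $A=\{i_1,\ldots ,i_r\}$. The crucial point, and the step I expect to require the most care, is verifying that $A$ is genuinely an antichain of $\mathfrak{d}_G$: a directed path from $i_j$ to $i_k$ in $\mathfrak{d}_G$ with $j\neq k$ would contract, component by component, to a directed walk from $a_j$ to $a_k$ in $\mathfrak{\widehat{d}}_G$, and since $a_j\neq a_k$ and $\mathfrak{\widehat{d}}_G$ is acyclic this forces $a_k\succ a_j$, contradicting that $A'$ is an antichain. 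With $A$ an antichain we have $|A|=r=|A'|$ and $\Omega_A=\Omega_{A'}$ (again by definition), so $A$ meets the constraint and witnesses $R\leq L$.

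Combining the two inequalities yields the claimed equality. The only genuine content beyond bookkeeping is the projection and lifting of directed paths between $\mathfrak{d}_G$ and $\mathfrak{\widehat{d}}_G$ used in the two steps; everything else follows directly from the way $\Omega_A$ was arranged to agree across $\mathfrak{d}_G$ and $\mathfrak{\widehat{d}}_G$ and from the fact that the undirected edges referenced in the constraint are the same on both sides.
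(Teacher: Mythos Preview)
Your proof is correct and follows the same correspondence $A\leftrightarrow A'$ between antichains of $\mathfrak{d}_G$ and $\mathfrak{\widehat{d}}_G=\mathfrak{d}_{\widehat G}$ that the paper uses. Your version is in fact slightly tidier: for $L\le R$ you invoke the definitional equality $\Omega_A=\Omega_{A'}$ directly (the paper instead runs a longer contradiction argument using the transitive-closure property of $\mathfrak{d}_G$ to push an offending undirected edge down into $A$ itself), and for $R\le L$ you supply the path-projection justification for the step the paper records only as ``Clearly $A$ is an antichain in $\mathfrak{d}_G$.''
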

\begin{proof} Let $A\in\mathcal{A}_{\mathfrak{d}_G}$ be such that $\Omega_A$ does not contain any undirected edge of $\mathfrak{d}_G$ and suppose $A=\{i_1, \ldots ,i_r\}$ with $i_j\in \mathcal{Z}_{a_j}$ for all $1\leq j \leq r$. Let $A'=\{a_1, \ldots ,a_r\}$. It is easy to see that $A'\in\mathcal{A}_{\mathfrak{d}_{\widehat{G}}}$. Hence it suffices to show that $\Omega_{A'}$ does not contain any undirected edge of $\mathfrak{d}_G$. Suppose the contrary that $\alpha\beta$ be an undirected edge of $\mathfrak{d}_G$ which is also in $\Omega_{A'}$. Therefore there exist $b,b'\succcurlyeq A'$ such that $\alpha\in\mathcal{Z}_b$ and $\beta\in\mathcal{Z}_{b'}$. Hence, there exist $a_p,a_q\in A'$ such that $b\succcurlyeq a_p$ and $b'\succcurlyeq a_q$. Now $\alpha\in\mathcal{Z}_b$, $\beta\in\mathcal{Z}_{b'}$, $i_p\in\mathcal{Z}_{a_p}$ and $i_q\in\mathcal{Z}_{a_q}$ imply that there are the directed edges $i_p\alpha$ and $i_q\beta$ in $\mathfrak{d}_G$. Since $G$ is unmixed, we get that
$$x_{i_q}y_\beta~,~x_\beta x_\alpha\in E(G)~\Longrightarrow ~x_{i_q}x_\alpha\in E(G)$$
$$x_{i_p}y_\alpha~,~x_\alpha x_{i_q}\in E(G)~\Longrightarrow ~x_{i_p}x_{i_q}\in E(G)$$
which contradicts the definition of $A$.

Conversely, let $A'\in\mathcal{A}_{\mathfrak{d}_{\widehat{G}}}$ be such that $\Omega_{A'}$ does not contain any undirected edge of $\mathfrak{d}_G$ and let $A'=\{a_1, \ldots ,a_r\}$. Also suppose $A=\{i_1, \ldots ,i_r\}$ with $i_j\in \mathcal{Z}_{a_j}$ for all $1\leq j \leq r$. Clearly $A$ is an antichain in $\mathfrak{d}_G$. It follows that $\Omega_A=\Omega_{A'}$ and hence $\Omega_A$ does not contain any undirected edge of $\mathfrak{d}_G$. This completes the proof.
\end{proof}

\begin{thm} Let $G$ be a very well-covered graph with $2n$ vertices. Then
$$\reg(R/I(G))=\max\{|A|~|~ A\in\mathcal{A}_{\mathfrak{d}_G}~,~ \Omega_A\nsupseteq e ~for ~all~undirected~ edges ~e~ in ~ \mathfrak{d}_G\} = a(G).$$
\end{thm}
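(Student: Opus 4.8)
The plan is to assemble the final theorem from the machinery already built, chaining together the regularity reduction of Proposition \ref{prop:red}, the combinatorial identity of Lemma \ref{lem:acy}, and the Cohen--Macaulay regularity formula of Theorem \ref{thm:reg}. The target equality has two pieces to establish: that $\reg(R/I(G))$ equals the maximum antichain quantity, and that this quantity equals $a(G)$. I would handle the first equality as the genuine content and then close the loop on $a(G)$ using the inequalities already isolated.

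First I would pass to the acyclic reduction. By Lemma \ref{lem:well}, $\widehat{G}$ is a Cohen--Macaulay very well-covered graph, so Theorem \ref{thm:reg} applies and gives $\reg(S/\widehat{I}) = a(\widehat{G})$. Proposition \ref{prop:red} then yields $\reg(R/I(G)) = \reg(S/\widehat{I}) = a(\widehat{G})$. The next step is to re-express $a(\widehat{G})$ combinatorially. Since $\widehat{G}$ is Cohen--Macaulay, the remark following Lemma \ref{lem:well} gives $\mathfrak{\widehat{d}}_G = \mathfrak{d}_{\widehat{G}}$, and because a Cohen--Macaulay very well-covered graph coincides with its own acyclic reduction, the undirected edges of $\mathfrak{d}_{\widehat{G}}$ match those of $\mathfrak{\widehat{d}}_G$. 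I expect that for the Cohen--Macaulay graph $\widehat{G}$ the invariant $a(\widehat{G})$ is exactly
$$\max\{|A|~|~ A\in\mathcal{A}_{\mathfrak{d}_{\widehat{G}}}~,~ \Omega_A\nsupseteq e ~\text{for all undirected edges } e \text{ in } \mathfrak{d}_G\};$$
the inequality $a(\widehat{G})$ dominating this maximum comes from Lemma \ref{lem:un} applied to $\widehat{G}$, and the reverse should follow from the fact that in the acyclic case every pairwise $3$-disjoint set corresponds to such an antichain (the perfect-matching edges $x_iy_i$ being the canonical choice, as exhibited in the proof of Lemma \ref{lem:un}).

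With that in hand, Lemma \ref{lem:acy} converts the $\mathfrak{d}_{\widehat{G}}$-antichain maximum into the $\mathfrak{d}_G$-antichain maximum, giving
$$\reg(R/I(G)) = \max\{|A|~|~ A\in\mathcal{A}_{\mathfrak{d}_G}~,~ \Omega_A\nsupseteq e ~\text{for all undirected edges } e \text{ in } \mathfrak{d}_G\}.$$
This settles the first equality of the theorem. For the second, Lemma \ref{lem:un} already gives that $a(G)$ is at least this antichain maximum, and Lemma \ref{lem:Katz} gives $\reg(R/I(G)) \geq a(G)$; combining these with the equality just proved forces $a(G)$ to equal the antichain maximum, which finishes the chain.

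The main obstacle I anticipate is the middle identification, namely verifying that $a(\widehat{G})$ is precisely the constrained antichain maximum on $\mathfrak{d}_{\widehat{G}}$ rather than just bounded below by it. The lower bound is immediate from Lemma \ref{lem:un}, but the reverse inequality requires knowing that in a Cohen--Macaulay very well-covered graph no pairwise $3$-disjoint family can beat the best admissible antichain; this is where the acyclicity and transitive closure of $\mathfrak{\widehat{d}}_G$ must be used to rule out larger $3$-disjoint configurations. If one prefers to avoid proving this reverse bound directly, an alternative is to note that Theorem \ref{thm:reg} combined with Lemma \ref{lem:Katz} already forces $a(\widehat{G})$ to equal $\reg(S/\widehat{I})$, and then route everything through the antichain maximum via Lemmas \ref{lem:un} and \ref{lem:acy} together with a squeeze argument, so that the delicate combinatorial count is replaced by the regularity inequalities.
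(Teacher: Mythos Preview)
Your overall architecture matches the paper's: reduce to the acyclic/Cohen--Macaulay case via Proposition \ref{prop:red} and Lemma \ref{lem:acy}, invoke Theorem \ref{thm:reg} there, and finish with a squeeze using Lemma \ref{lem:un} and Lemma \ref{lem:Katz}. That part is fine.

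The gap you flag yourself is real, and your proposed workaround does not close it. Concretely, to obtain $\reg(R/I(G)) = \max\{|A| : \ldots\}$ you need the inequality $a(\widehat{G}) \leq \max\{|A| : A\in\mathcal{A}_{\mathfrak{d}_{\widehat{G}}},\ \Omega_A\nsupseteq e\}$. Your ``alternative'' squeeze assembles only the following facts: $\reg(R/I(G)) = \reg(S/\widehat{I}) = a(\widehat{G})$, $a(\widehat{G}) \geq \max_{\widehat{G}} = \max_G$, $a(G) \geq \max_G$, and $\reg(R/I(G)) \geq a(G)$. Every one of these is a lower bound on $\reg$ or on $a$; there is no upper bound on $\reg$ or on $a(\widehat{G})$ anywhere in the list, so nothing gets squeezed. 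The ``regularity inequalities'' do not replace the combinatorial step.

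What the paper actually does at this point is exactly the construction you gestured at but did not carry out. Working in the Cohen--Macaulay case (so $G=\widehat{G}$ and $(\ast\ast)$ holds), take a pairwise $3$-disjoint set $B$ of edges and set
\[
A=\{\,i\ :\ x_iy_j\in B\text{ for some }j\,\}\ \cup\ \{\,i\ :\ x_ix_k\in B,\ i<k\,\}.
\]
One checks, using conditions (i) and (ii) of Proposition \ref{pro:well}, that $A$ is an antichain in $\mathfrak{d}_G$ with $|A|=|B|$ and that $\Omega_A$ contains no undirected edge of $\mathfrak{d}_G$. This gives $a(G)\leq \max\{|A|:\ldots\}$ in the Cohen--Macaulay case, which together with Lemma \ref{lem:un} yields equality; the rest of your chain then goes through. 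So the missing ingredient is not a squeeze but this explicit map from $3$-disjoint families to admissible antichains.
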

\begin{proof} Let $\widehat{G}$ be the acyclic reduction of $G$ on the vertex set $\{u_1, \ldots , u_t\}\cup\{v_1, \ldots , v_t\}$ with edge ideal $\widehat{I}\subseteq S$. Since $\widehat{G}$ is Cohen-Macaulay, from Proposition \ref{prop:red} and Lemma \ref{lem:acy}, it is enough to prove the assertion in the case when $G$ is Cohen-Macaulay. So suppose $G$ is a Cohen-Macaulay very well-covered graph with $2n$ vertices. We may assume $(\ast)$.  First of all observe that under our assumption, from Theorem \ref{thm:reg}, $\reg(R/I(G)) =a(G)$. \\
Now let $B$ be a set of pairwise 3-disjoint edges in $G$. Set
$$A=\{x_i~|~x_iy_j\in B~ \text{for some} ~j\}\cup\{x_i~|~x_ix_k\in B~,~i<k\}.$$
One can see that $A$ is an antichain in $\mathfrak{d}_G$ and that $\Omega_A$ does not contain any undirected edge of $\mathfrak{d}_G$. This implies that
$$a(G)\leq \max\{|A|~|~ A\in\mathcal{A}_{\mathfrak{d}_G}~,~ \Omega_A\nsupseteq e ~\text{for all undirected edges} ~e~ \text{of} ~ \mathfrak{d}_G\}$$
which together with Theorem \ref{thm:reg} and Lemma \ref{lem:un} completes the proof.

\end{proof}
It was suggested by Villarreal that if $G$ is a Cohen-Macaulay graph, then $G\setminus\{v\}$ is Cohen-Macaulay
for some vertex $v$ in G, see \cite{V1}. Estrada and Villarreal proved this for those graphs that are Cohen-Macaulay and bipartite  by showing the fact that there is a vertex $v\in V(G)$ such that $\textnormal{deg}(v)=1$ (\cite[Theorem 2.4]{EV}). Van Tuyl and Villarreal proved the same result for sequentially Cohen-Macaulay bipartite graphs in \cite [Lemma 3.9]{VTV}). Using the above fact, Van Tuyl in \cite{VT} showed that if $G$ is bipartite, then

$\bullet$ $G$ is sequentially Cohen-Macaulay if and only if it is vertex decomposable.

$\bullet$ If $G$ is sequentially Cohen-Macaulay, then $\reg(R/I(G))=a(G)$.\\
So it is natural to ask the following question:
\begin{ques} Let $G$ be a sequentially Cohen-Macaulay graph with $2n$ vertices which are not isolated and with $\heit(I(G))=n$. Does $G$ have a vertex $v$ such that $\textnormal{deg}(v)=1?$
\end {ques}
If the answer is positive, one can easily generalize main results of \cite{VT} to the class of graphs in Question 4.11.

\par \vspace{2mm}

{\bf Acknowledgements.}
We are grategul for helpful comments of Professor A.~Van Tuyl.


\begin{thebibliography}{10}

\bibitem{BW} A.~ Bj\"{o}rner and M.~L.~ Wachs, {\em Shellable nonpure complexes
and posets I,} Trans. Amer. Math. Soc. \textbf{348}  (1996),
1299--1327.

\bibitem{CRT} M.~Crupi, G.~Rinaldo, and N.~Terai, {\em
Cohen-Macaulay edge ideals whose height is half of the number of
vertices}, preprint, arXiv:math/0909.4368v1~[math.AC].

\bibitem{DE} A.~Dochtermann and A.~Engstr\"{o}m, {\em Algebraic
properties of edge ideals via combinatorial topology}, Electron.
J. Combin. \textbf {16} (2009), no. 2.

\bibitem{EV} M.~Estrada, R.H.~Villarreal, {\em Cohen-Macaulay bipartite graphs}, Arch. Math. {\bf 68} (1997),
124--128.

\bibitem{GV1} I.~Gitler and C.E.~Valencia  {\em Bounds for invariants of edge-rings},
Com. Alg. \textbf{33} (2005), 1603--1616.

\bibitem{GV2} I.~Gitler and C.E.~Valencia  {\em Bounds for graph invariants},
preprint, arXiv:math/0510387v2~[math.CO].

\bibitem{HH} J.~Herzog and~T.~Hibi, {\em Distributive lattices, bipartite graphs and Alexander duality},
J. Algebraic Combin. {\bf 22} (2005), no. 3, 289--302.

\bibitem{K} M.~Katzman, {\em Characteristic-independence of Betti numbers of
graph ideals}, J. Combin. Theory Ser. A {\bf 113} (2006), no. 3,
435--454.

\bibitem{Ku} M.~Kummini, {\em Regularity, depth and arithmetic rank of bipartite edge ideals}, J. Algebr. Comb.
{\bf 30}, 429�-445 (2009),  DOI 10.1007/s10801-009-0171-6, arXiv.09002.0437v1.

\bibitem{T} N.~Terai, {\em Alexander duality theorem and Stanley-Reisner rings},
S�urikaisekikenky�usho K�oky�uruko (1999), no. 1078, 174--184,
Free resolutions of coordinate rings of projective varieties and
related topics (Kyoto 1998).

\bibitem{VT} A.~Van Tuyl, {\em Sequentially Cohen-Macaulay
bipartite graphs: vertex decomposability and regularity}, Arch. Math. {\bf 93} (2009) 451--459.

\bibitem{VTV} A.~Van Tuyl and R.H.~Villarreal, {\em Shellable graphs and sequentially Cohen--Macaulay bipartite graphs}, Jornal of Combinatorial Theory, Series A {\bf 115} (5), 799--814, (2008).

\bibitem{V1} R.H.~ Villarreal, {\em Cohen-Macaulay graphs}. Manuscripta Math. {\bf 66}, 277--293 (1990).

\bibitem{V2} R.H.~Villarreal, {\em Unmixed bipartite graphs}, Rev. Colombiana Mat.
{\bf 41} (2007), no. 2, 393�-395.

\end{thebibliography}
\end{document}